\newcommand{\lcirc}[1]{\scaleobj{.3}{\mbox{\Kr{#1}}}}
\newtheorem{theorem}{Theorem}[section]
\newtheorem{lemma}[theorem]{Lemma}
\newtheorem{proposition}[theorem]{Proposition}
\newtheorem{corollary}[theorem]{Corollary}
\theoremstyle{definition}
\newtheorem{definition}[theorem]{Definition}
\newtheorem{example}[theorem]{Example}
\theoremstyle{remark}
\newtheorem{remark}[theorem]{Remark}
\theoremstyle{remark}
\newtheorem{notation}[theorem]{Notation}
\def\N{\mathbb{N}}
\def\U{\mathcal{U}}
\def\V{\mathcal{V}}
\def\oF\textcircled{\text{\tiny{F}}}
\begin{document}

\title
{Central sets and infinite monochromatic exponential patterns}

\author{Mauro Di Nasso}

\author{Mariaclara Ragosta}

\address{Dipartimento di Matematica\\
Universit\`a di Pisa, Largo B. Pontecorvo 5, 56127 Pisa, Italy.}
\email{mauro.di.nasso@unipi.it}

\address{Department of Applied Mathematics,
Charles University, Faculty of Mathematics and Physics
Ke Karlovu 3, 121 16 Praha 2, Czech Republic.}
\email{mariaclara@kam.mff.cuni.cz}

\subjclass[2000]
{Primary 05D10; Secondary 05A17, 54D80, 37B20, 03H15.}

\keywords{Arithmetic Ramsey Theory, central sets, algebra in the space of ultrafilters.}

\begin{abstract}
A cornerstone of Arithmetic Ramsey Theory is \emph{Hindman's Theorem} of 1974:
``For every finite coloring of the natural numbers, there exists an
infinite sequence $(x_n)$ such that all finite sums $x_{n_1}+\ldots+x_{n_k}$
of distinct elements are monochromatic".
We extend the validity of Hindman's theorem to a broad class
of non-associative operations that 
generalize exponentiation between natural numbers. 
The main tool we use in our proofs is given by the central sets, a special class of sets isolated in 1981 
by H. Furstenberg \cite{fu} in the context of topological dynamics.
It was later discovered in 1990 by V. Bergelson and N. Hindman \cite{bh}
that central sets can be characterized as those sets that belong to
a minimal idempotent ultrafilter, thus opening up the study of their rich combinatorial structure
with the well-developed machinery of algebra in the space of ultrafilters.
As a corollary of our main result, we obtain an extension of 
Sahasrabudhe's results of 2018 about the existence
of arbitrarily large (but finite) monochromatic exponential patterns; 
indeed, we obtain the existence of an infinite sequence such that
all finite exponential configurations originating from its elements are
monochromatic. 
\end{abstract}

\maketitle

\section*{Introduction}

A fundamental result of Arithmetic Ramsey Theory is \emph{Hindman's Theorem} \cite{hi74} of 1974:
``For every finite coloring (\emph{i.e.}, partition) of the natural numbers, there exists an
infinite sequence $(x_n)$ such that all finite sums $x_{n_1}+\ldots+x_{n_k}$
of distinct elements are monochromatic (that is, they belong to the same piece of the partition)".
The original combinatorial proof by N. Hindman was rather lengthy and complex.\footnote
{~Quoting N. Hindman's own words: 
``Anyone with a very masochistic bent is invited to wade through
the original combinatorial proof"; ``If the reader has a graduate student 
that she wants to punish, she should make him read and understand that original proof."}
However, soon an alternative approach was found by combining
a previous result by F. Galvin with an observation of S. Glazer
about the existence of ultrafilters on $\N$ that are idempotent
with respect to an appropriate operation. The resulting proof 
was particularly elegant and short, and paved the way for
a whole field of research; 
in fact, since then N. Hindman himself, D. Strauss, V. Bergelson,
and their co-authors have produced an impressive number of relevant new results
in Arithmetic Ramsey Theory, obtained by exploiting the algebraic properties 
of the compact space of ultrafilters (see the volume \cite{hs}).
In particular, the ultrafilter approach provides a direct way to extend
the validity of the original Hindman's Theorem from $(\N,+)$ to arbitrary semigroups. 

Typically, results in Arithmetic Ramsey Theory are about \emph{finite} configurations;
a fundamental example is the celebrated van der Waerden's Theorem \cite{vdW} of 1927,
stating that in every finite coloring of the natural numbers one
finds arbitrarily long (but finite) arithmetic progressions that are monochromatic.
In contrast, the main peculiarity of Hindman's Theorem is that it deals with \emph{infinite} configurations.
It is remarkable that results about the existence of infinite monochromatic configurations 
in semigroups are really scarce; basically, they reduce to
Milliken-Taylor's Theorem \cite{mi,ta}, which is obtained by combining
Hindman's Theorem with Ramsey's Theorem, and to its recently found polynomial extensions \cite{bhw,lu}.

In this paper we extend the validity of Hindman's Theorem to a broad class of 
operations which generalize exponentiation between natural numbers.
To the authors' knowledge, this is the first contribution aimed at generalizing Hindman's Theorem
to the non-associative case.
As a corollary of our main result, we obtain an extension of 
Sahasrabudhe's results \cite{sa} of 2018 about the existence
of arbitrarily large (but finite) monochromatic exponential patterns; 
indeed, we obtain the existence of an infinite sequence such that
all finite exponential configurations originating from its elements are monochromatic. 

The results presented here lie, in a sense, at the intersection of combinatorics, 
topological dynamics, algebra and mathematical logic. 
In fact, our results fall within Ramsey's theory,
but the proofs are based on the use of central sets,
a class of sets of natural numbers that was introduced in 1981 by H. Furstenberg \cite{fu}
in the context of topological dynamics. (A central set is defined
as the set of returns of a point $x$ to a neighborhood of 
a uniformly recurring point $y$, where $x$ and $y$ are proximal.)
Ten years later, V. Belgelson and N. Hindman found that central sets can be characterized 
in terms of algebras of ultrafilters, as elements of minimal idempotent ultrafilters 
(see \cite{hi20} for a history of central sets and their applications). 
This created a bridge between topological dynamics
and algebra on the space of ultrafilters that produced several new results.
Finally, although not made explicit in this article,
there are close connections between the algebra on ultrafilters and the
hypernatural numbers of nonstandard analysis; in fact, our initial step on the 
existence of monochromatic exponential triples $a, b, b^a$ in the natural numbers
was first obtained using nonstandard methods (see \cite[\S 6.3]{dj}).

The paper is organized as follows.
In the first section we introduce the context of our research,
namely the field of Arithmetic Ramsey Theory, and we present the
problem of the existence of exponential monochromatic configurations in the natural numbers, which
was our original motivation.
In the second section we focus on natural numbers, and
present a complete statement of our main result when the considered
operation is the exponentiation $a\star b = b^a$.
In the third section we introduce the ``exponential-like" operations on commutative semirings,
we fix the notation, and provide the precise formulation of our main theorem.
We then present numerous examples of exponential-like operations and 
some of the corresponding monochromatic configurations that are obtained,
all of which appear to be new in the literature.
In the fourth section we will briefly recall some basic notions and facts
about the algebra in the space of ultrafilters $\beta S$ where $S$ is a semigroup.
In particular, we will introduce central sets as
elements of minimal idempotent ultrafilters, and state
a version of the \emph{Central Sets Theorem} for commutative semigroups, 
a crucial tool for our proof. The fifth section is devoted to the proof of the Main Theorem;
as a preliminary step, we present a simplified
proof for the particular case of exponential patterns in $\N$.
Finally, the sixth and last section contains a list of open problems 
that emerged naturally from our research.

\smallskip
\section{The context}

Let us briefly recall the scope of this research.
An area of combinatorics, usually named ``\emph{Arithmetic Ramsey Theory},"
focuses on partition properties of the following kind:
``Given a family of \emph{patterns} $\mathfrak{P}\subset\mathcal{P}(\N)$
defined by arithmetic means, and given an arbitrary finite \emph{coloring}
(partition) $\N=C_1\cup\ldots\cup C_r$ of the natural numbers,
is it always possible to find a pattern $P\in\mathfrak{P}$ which is
\emph{monochromatic}, \emph{i.e.}, such that $P\subseteq C_i$ for some $i$?"
More generally, similar results are investigated in the context of semigroups.

Historically, one of the first results in Arithmetic Ramsey Theory is considered to be 
Schur's Theorem \cite{sc} of 1916. In that paper
it was proved that for every $n$, there exist non-trivial solutions
to Fermat's equation $x^n+y^n\equiv z^n\mod p$ for all sufficiently large primes $p$.
As a preliminary combinatorial lemma towards that result,
I. Schur showed the following property:
``In every finite coloring $\N=C_1\cup\ldots\cup C_r$
there exists a monochromatic triple $\{a, b, a+b\}\subseteq C_i$".\footnote
{~Actually, Schur proved the finite version of that result:
``For every $r$ there exists $N$ such that in every $r$-coloring
$\{1,\ldots,N\}=C_1\cup\ldots\cup C_r$ there exists a monochromatic triple 
$\{a, b, a+b\}\subseteq C_i$." 
With a compactness argument, it is shown that
the finite and infinite versions follow directly from each other.}
Several decades passed before that result was generalized to sums 
of arbitrary length by V.I. Arnoutov \cite{ar}, J. Folkman, 
and J. Sanders \cite{san} independently of each other.\footnote
{~A historical account about the correct credit for this theorem can be found in 
\cite[\S 8]{so}.}

\begin{itemize}
\item
\textbf{Arnoutov-Folkman-Sanders' Theorem.}
\emph{For every finite coloring $\N=C_1\cup\ldots\cup C_r$
and for every $k$ there exist
elements $a_1<\ldots<a_k$ such that all sums of distinct $a_i$
belong to a same color $C_i$.}
\end{itemize}

An important breakthrough was finally made in 1974 by N. Hindman \cite{hi74}
who succeeded in further extending that sum property from finite sequences to infinite sequences.

\begin{itemize}
\item
\textbf{Hindman's Theorem.}
\emph{For every finite coloring $\N=C_1\cup\ldots\cup C_r$
there exists an infinite increasing sequence $(a_n)_{n\in\N}$ such that
its set of finite sums is monochromatic; that is, there exists a color $C_i$ such that}
$$\text{FS}(a_n)_{n=1}^\infty:=\left\{\sum_{i\in F}a_i\,\Big|\,
\emptyset\ne F\subset\N\ \text{finite}\,\right\}\subseteq C_i.$$
\end{itemize}

As already mentioned in the introduction, 
the original combinatorial proof by N. Hindman was rather lengthy and complex.
However, it was soon found by F. Galvin and S. Glazer 
a much shorter alternative proof, grounded on the existence of ultrafilters on $\N$ 
that are idempotent with respect to a suitable associative operation inherited
from the sum on natural numbers. That was the starting point
of a whole field of research about algebra of ultrafilters that
produced an extensive theory and plenty of new results
in Ramsey Theory and related areas (see the volume \cite{hs}).
In particular, the ultrafilter proof provided a direct way to extend
the validity of Hindman's Theorem to arbitrary semigroups. 

The first partial result aimed to extend Schur's Theorem
from sums to exponentiations of natural numbers
was obtained in 2011 by A. Sisto \cite{si}, who proved that
monochromatic exponential triples $\{a, b, b^a\}$
can be found in every bipartition $\N=C_1\cup C_2$ of the natural numbers.
In 2018 J. Sahasrabudhe \cite{sa}, with a complex and clever 
combinatorial proof,
was able to strongly extend that result,
and showed that one can indeed find plenty of finite monochromatic 
patterns defined by means of multiplications and exponentiations.
More precisely, he proved that for every coloring $\N=C_1\cup\ldots\cup C_r$ 
(with an arbitrary finite number of colors), there exist sequences 
$(a_n)_{n=1}^\ell$ of arbitrary finite length $\ell$ such that all products and exponentiations 
of its distinct elements are monochromatic, with appropriate restrictions on the order 
in which elements are considered.
Recently \cite{dr}, the authors used the ultrafilter method to find
a much shorter proof of the monochromaticity of exponential triples,
and also proved the following infinitary version:  
``For every finite coloring of $\N$ there exists an increasing sequence $(a_n)_{n\in\N}$ such that
the infinite pattern $\{a_n\mid n\in\N\}\cup\{(a_{n+1})^{a_n}\mid n\in\N\}$
is monochromatic."

In this paper we introduce the ``ex\-po\-nen\-tial-like'' operations, which 
generalize the usual exponentiation between natural numbers,
and make sense in any commutative semigroup.
(See Definition \ref{def-exponential-like} below).
Although they are neither commutative nor associative, we show 
that these operations satisfy a strong partition regularity property
that implies the existence of \emph{infinite} monochromatic configurations,
as stated in Hindman's Theorem. 
In fact, using combinatorial properties of \emph{central sets}
as obtained in the context of algebra in the space of ultrafilters,
we prove the following result:\footnote
{~See \S \ref{sec-exponential-like} for a precise statement.}

\begin{itemize}
\item
\textbf{Hindman's Theorem for exponential-like operations.}
\emph{Let $\star$ be an exponential-like operation on a commutative semigroup $(S,*)$.
For every finite coloring of $S=C_1\cup\ldots\cup C_r$ there exists an
infinite sequence $(a_n)_{n\in\N}$ such that all ``appropriate" finite iterations of $\star$-products
of its elements are monochromatic.}
\end{itemize}

The above sequence can be assumed to be 1-1 under suitable assumptions 
about the semigroup $S$ which are satisfied by the most relevant examples, 
including of course the natural numbers with sum or with product.\footnote
{~Precisely, these hypothesis are ``weak cancellativity" and ``root-finitess" (see Definitions 
\ref{def-weaklycancellative} and \ref{def-rootfinite}).}

We remark that the above class of ``appropriate" $\star$-products 
includes all elements of the form $a_{n_1}\star\ldots\star a_{n_k}$
with brackets placed in any meaningful way (recall that
$\star$ is not associative), and
where indexes $n_1<\ldots<n_k$ are taken in increasing order, as in Hindman's Theorem.
In fact, the class of monochromatic configurations considered
is actually much broader; for example, products in which the indices are repeated and
not necessarily taken in increasing order are also ``appropriate."

As a particular case when $a\star b=b^a$ is the usual exponentiation, 
we have an extension of Sahasrabudhe's results about monochromatic exponential patterns
on $\N$, in a similar fashion as Hindman's Theorem extends Arnoutov-Folkman-Sanders's Theorem.
Indeed, we obtain that in any finite coloring of the natural numbers
there exists an \emph{infinite} increasing sequence -- rather than \emph{finite} 
sequences of arbitrary length -- such that
all finite exponential configurations originating from its elements are
monochromatic. 
We do not know whether it is also possible to obtain multiplicative patterns 
in the same color, so as to achieve a complete generalization of Sahasrabudhe's results.

To illustrate the content of our Main Theorem \ref{main} in the particular case of exponentiation 
in $\N$, we state below a consequence of it.\footnote
{~See Corollary \ref{cor-exp}.}

\begin{itemize}
\item
\textbf{Monochromatic exponential towers}.
\emph{For every (fast-growing) function $g:\N\to\N$
and for every finite coloring of the natural numbers there exists
an increasing sequence $(a_n)_{n\in\N}$ such that
for all $n_1,n_2,\ldots,n_k<s<t$
where $k\le g(s-1)$, 
the following towers of exponentiations are monochromatic:\footnote{
~When $k=1$, we agree that the above property means that 
elements $a_n$ are monochromatic.}
$${a_t}^ {   {a_s}^{    {a_{n_1}}^{   {a_{n_2}}^{\iddots^{a_{n_k}}   }    }    } }  $$}
\end{itemize}

We emphasize the fact that the above indexes $n_1,n_2,\ldots,n_k$ are not required
to be arranged in decreasing order nor to be pairwise different, but only to be 
all smaller than $s$. For instance, by appropriately choosing the function $g$, 
one obtains that also exponentiations of the following form are monochromatic: 
$${a_4}^{   {a_3}^{   {a_1}^{ {a_2}}}};\ 
{a_4}^{  {a_3}^{  {a_2}^{  {a_1}^{ {a_2}}}}}$$

Moreover, we remark that also towers of exponentiations where parentheses are 
placed in any possible position that makes sense are monochromatic.
So, \emph{e.g.}, also the following numbers will be part of the
monochromatic pattern:
$$({a_4}^{{a_3}})^{   {a_1}^{ {a_2}}};\ 
{a_4}^{\left({a_3}^{  {a_2}^{  {a_2}}}\right)^{a_2}}.$$

\section{Exponential patterns in the natural numbers}\label{sec-exp}

Before considering the general case of ``exponential-like" operations
on commutative semigroups, in this section we will focus
on the particular case of exponentiation on the natural numbers $\N$.
In fact, this is the context in which our research began and from which 
all the notions we consider in this article arose.
Moreover, we believe that treating the case of natural numbers first 
will help make the content of our general results more transparent.

\smallskip
Given an infinite sequence of natural numbers $(a_n)_{n\in\N}$,
we want to define the corresponding set of ``exponential patterns", 
similar to what is done for the set of finite sums $\text{FS}(x_n)_{n=1}^\infty$
considered in Hindman's Theorem.
We remark that -- unlike addition and multiplication -- exponentiation is not a commutative operation 
and, more importantly, neither is it associative.\footnote
{~Recall that the Finite Sums property as given by
Hindman's Theorem also holds if one replaces $(\N,+)$
with any semigroup $(S,*)$ (see \cite[\S 5.2]{hs}).}
For these reasons, attention must be paid to the order in which elements are 
considered. 

\subsection{Sets of finite exponentiations}
One possible natural way of inductively itemizing 
sets of ``exponential patterns" obtained from a finite sequence
$(a_1,\ldots,a_n)$ of natural numbers is the following.
(Since we are dealing with exponentiations, we assume that every $a_i\ge 2$.)

At step 1 we start with $\text{EXP}(a_1)=\{a_1\}$, 
and at step 2 we set $\text{EXP}(a_1,a_2)=\{a_2, {a_2}^{a_1}\}$.
At the next step we consider $a_3$, along with its powers
with exponents taken from one of the previous steps 2 or 1,
along with the triple exponentiations obtained
by first picking a power of $a_3$ with an exponent taken from
the previous step 2,
and then raising it to the exponent from step 1.
So, at step 3 we obtain the following set:
{\small $$\text{EXP}(a_1,a_2,a_3)=\left\{
a_3,\, {a_3}^{a_2},\, {a_3}^{{a_2}^{a_1}},\, {a_3}^{a_1},\, 
({a_3}^{a_2})^{a_1}={a_3}^{a_2\cdot a_1},\,
\left({a_3}^{{a_2}^{a_1}}\right)^{\!a_1}\!=
{a_3}^{{a_2}^{a_1}\cdot a_1}\right\}.$$}

Note that the exponents of $a_3$ that appear in $\text{EXP}(a_1,a_2,a_3)$
are precisely given by the products $e_2\cdot e_1$ where 
$e_2\in\text{EXP}(a_1,a_2)\cup\{1\}$
and $e_1\in\text{EXP}(a_1)\cup\{1\}$.
We follow this procedure, and give the following definition.

\begin{definition}\label{def-FE}
For every finite sequence $(a_1,\ldots,a_n)$ of natural numbers, the
\emph{set of finite exponentiations} $\text{FE}(a_1,\ldots,a_n)$
is defined as the union
$$\text{FE}(a_1,\ldots,a_n):=\bigcup_{i=1}^n\text{EXP}(a_1,\ldots,a_i)$$
where we set $\text{EXP}(a_1)=\{a_1\}$ and inductively, 
$$\text{EXP}(a_1,\ldots,a_i)=
\left\{{a_i}^{e_{i-1}\cdots\, e_1}\mid e_t\in\text{EXP}(a_1,\ldots,a_t)\cup\{1\}\
\text{for}\ t=1,\ldots,i-1\right\}.$$

The set of \emph{finite exponentiations} of an infinite sequence $(a_n)_{n\in\N}$ is the union:
$$\text{FE}(a_n)_{n=1}^\infty:=\bigcup_{n=1}^\infty\text{FE}(a_1,\ldots,a_n)=
\bigcup_{n=1}^\infty\text{EXP}(a_1,\ldots,a_n).$$
\end{definition}

Note that $\text{EXP}(a_1,\ldots,a_i)\supseteq\text{EXP}(a_{t_1},\ldots,a_{t_m},a_i)$
for all $1\le t_1<\ldots<t_m<i$ (Just take exponents $e_t=1$ for 
the indexes $t\notin\{t_1<\ldots<t_m\}$.)
In consequence, one has that
$\text{FE}(a_{t_1},\ldots,a_{t_m})\subseteq\text{FE}(a_1,\ldots,a_n)$
for all $1\le t_1<\ldots<t_m\le n$.

Also note that the above definition of $\text{FE}(a_1,\ldots,a_n)$ is given for \emph{any} 
finite sequence $(a_1,\ldots,a_n)$; in particular elements $a_i$ may repeat.

\begin{remark}
The definition of $\text{FE}(a_1,\ldots,a_n)$ above seems ``almost" the same as 
the one given by J. Sahasrabudhe in \cite{sa}, but the key difference is that 
the considered elements $a_1,\ldots,a_n$ are taken in reverse order.
This is really a crucial point. In fact, as remarked at the end of \cite[\S 5]{sa}, 
an infinitary version of the monochromaticity of exponential configurations
does not hold if one takes the reverse order on the indices,
even when restricting to simple patterns such as
$\{a_n\mid n\in\N\}\cup\{{a_n}^{a_{n+1}}\mid n\in\N\}$.
We do not have a clear and precise understanding 
of why this difference is so significant. However, we would like to emphasize that, 
when dealing with infinite configurations, the two operations are drastically different.
In fact, let's focus on how arbitrarily high towers of exponentiations can be obtained.
If we follow Sahasrabudhe's approach, it is possible to fix a base and build increasingly 
higher towers by adding elements on top with increasingly larger indexes. 
\[a_1,\; a_1^{a_2}, a_1^{a_2^{a_3}}, \ldots, \; a_1^{a_2^{\iddots^{a_7^{a_8}}}}, \ldots\]
Instead, according to our definition, in order to obtain increasingly higher towers, 
at a certain point it is necessary to change the base; indeed, a tower 
of height $k$ can only be constructed if the base has an index greater than or equal to $k$.
\[a_1, \;a_2^{a_1}, a_3^{a_2^{a_1}}, \ldots, a_8^{a_7^{\iddots^{a_3^{a_2^{a_1}}}}}, \ldots\]
\end{remark}

\smallskip
Another possible natural definition for sets of exponentiations is the following.

\begin{definition}\label{def-TE}
For every finite sequence $(a_1,\ldots,a_n)$ of natural numbers, the
set of \emph{towers of exponentiations} $\text{TE}(a_1,\ldots,a_n)$
is defined as the union
$$\text{TE}(a_1,\ldots,a_n):=\bigcup_{i=1}^n\text{EXP}'(a_1,\ldots,a_i)$$
where we set $\text{EXP}'(a_s)=\{a_s\}$ for every $s=1,\ldots,n$,
and then, inductively for all $1\le s_1<\ldots<s_m\le n$, we set:
$$\text{EXP}'(a_{s_1},\ldots,a_{s_m})=
\bigcup_{\ell<m}\left\{y^x\mid x\in\text{EXP}'(a_{s_1},\ldots,a_{s_\ell}),\,
y\in\text{EXP}'(a_{s_{\ell+1}},\ldots,a_{s_m})\right\}.$$ 

The set of \emph{towers of exponentiations} of an infinite sequence $(a_n)_{n\in\N}$ is the union:
$$\text{TE}(a_n)_{n=1}^\infty:=\bigcup_{n=1}^\infty\text{TE}(a_1,\ldots,a_n)=
\bigcup_{n=1}^\infty\text{EXP}'(a_1,\ldots,a_n).$$
\end{definition}

\begin{remark}\label{TE}
We observe that the set $\text{TE}(a_1,\ldots,a_n)$ 
is precisely the set of all towers of exponentiations of elements 
from the sequence $(a_1,\ldots,a_n)$ where
indexes are taken in decreasing order
and parentheses are placed in any possible manner that makes sense.
\end{remark}

Let us clarify this point. For simplicity, let us denote $x\star y$ to mean $y^x$.
One usually writes
$a^{b^c}$ to mean $(c\star b)\star a$;
$a^{b^{c^d}}$ to mean $((d\star c)\star b)\star a$, and so forth.
In other words, towers of exponentiations as they are usually written 
without the use of parentheses actually correspond to iterations of exponentiations 
in which the parentheses ``accumulate'' on the left-hand side, \emph{i.e.}
$${a_{1}}^ {    {a_{2}}^{    {a_{3}}^{{\iddots^{a_{n}}   }    }    } } 
\ \text{means}\quad
(((\ldots((a_n\star a_{n-1})\star a_{n-2})\ldots)\star a_3)\star a_2)\star a_1.$$
It is easily seen that
$$\left\{\,{a_{m_1}}^ {    {a_{m_2}}^{    {{\iddots^{a_{m_k}}   }    }    } }\  \Bigg| \ 
n\ge m_1>m_2>\ldots>m_k\ge 1\,\right\}\subseteq \text{TE}(a_1,\ldots,a_n).$$

However, there is plenty of other possible correct ways of placing parentheses;
\emph{e.g.}, $a^{(b^c)^d}=a^{b^{cd}}$ corresponds to 
$(d\star(c\star b))\star a$,
$(a^b)^{(c^d)}=a^{b c^d}$ corresponds to 
$(d\star c)\star(b\star a)$,
$((a^b)^c)^d=a^{bcd}$ corresponds to 
$d\star(c\star(b\star a))$, and so forth.

\smallskip
We observe that all towers of exponentiations belongs 
to the sets $\text{FE}(a_1,\ldots,a_n)$, but not conversely.

\begin{proposition}\label{TEinclusion}
Let $(a_n)_{n\in\N}$ be a sequence of natural numbers. Then for every n:
$$\text{EXP}\,'(a_1,\ldots,a_n)\subseteq \text{EXP}(a_1,\ldots,a_n)$$
and the inclusion is strict when $n\ge 3$. Hence,
$\text{TE}(a_n)_{n=1}^\infty\subsetneq\text{FE}(a_n)_{n=1}^\infty$.
\end{proposition}

\begin{proof}
This is a particular case of Proposition \ref{TEthetainclusion} in the next section
(see Remark \ref{particularcase}).
\end{proof}

\smallskip
\subsection{Larger sets of exponential patterns}
The sets of finite exponentials considered can be further expanded as follows:

\begin{definition}\label{def-FENPhi}
Let $N\in\N$, and let $\Phi=(f_n\mid n\ge 2)$ be a sequence
of functions $f_n:\N^{n-1}\to\N$.
For every infinite sequence $(a_n)_{n\in\N}$ of natural numbers,
the corresponding set of \emph{finite $(N,\Phi)$}-\emph{exponentiations} 
is the union:
$$\text{FE}_{N,\Phi}(a_n)_{n=1}^\infty:=
\bigcup_{k=1}^\infty\text{EXP}_{N,\Phi}(a_1,\ldots,a_k)$$
where $\text{EXP}_{N,\Phi}(a_1)=\{a_1\}$, and for $k\ge 2$:
\begin{multline*}
\text{EXP}_{N,\Phi}(a_1,\ldots,a_k) =
\\
= \left\{{a_k}^{\prod_{t=1}^{k-1}{a_t}^{\lambda_t}}\ \bigg{|}\ 
0\le\lambda_1\le N,\ 0\leq \lambda_t\leq f_t(a_1,\ldots,a_{t-1})\ 
\text{for}\ 2\le t\le k-1\right\}.
\end{multline*}
\end{definition}

\smallskip
\begin{remark}
Theorem 4 of \cite{sa}, which is the most general result of that paper, 
considers sets $\text{FEP}_W(x_1,\ldots,x_n)$ of finite exponentials and finite products 
that enlarge $\text{FE}(a_1,\ldots,a_n)$ using a fixed ``weight function'' $W$ 
defined on the family of subsets of $\{1,\ldots,n\}$.
When restricted to exponentiations,
those sets $\text{FEP}_W(x_1,\ldots,x_n)$ are essentially the same
as our sets $\text{FE}_{N,\Phi}(a_1,\ldots,a_n)=
\bigcup_{k=1}^n\text{EXP}_{N,\Phi}(a_1,\ldots,a_k)$.\footnote
{~The definition of $\text{FEP}_W(x_1,\ldots,x_n)$ as displayed
on \cite[page 16]{sa} seems to be inconsistent. 
In accordance with the author's description of it immediately below, 
we assume that, in the correct formulation, 
$\text{FP}_{[m]\setminus B,W}(x_{i+1},\ldots,x_m)$
should be replaced by 
$\text{FP}_{\{i+1,\ldots,m\}\setminus B,W}(x_1,\ldots,x_m)$,
so that:
$$\text{FEP}_W(x_1,\ldots,x_m)=
\left\{\prod_{i\in B}x_i^{e_i}:\emptyset\ne B\subseteq [m],e_i\in
\text{FP}_{\{i+1,\ldots,m\}\setminus B,W}(x_1,\ldots,x_m)\ \text{for each}\ i\in B\right\}.$$
Besides, we assume that 
$\text{FP}_{\emptyset, W}(x_1,\ldots,x_m)=\{1\}$.}
Indeed, with the notation in \cite{sa}, all exponential elements in 
$\text{FEP}_W(x_1,\ldots,x_n)$ 
are obtained when $B$ is a singleton,
and they are numbers of the form ${x_i}^{\mu_i}$ for $i\le n$
where the exponents $\mu_i=\prod_{s=i+1}^n {x_s}^{\lambda_s}$
for appropriate $0\le\lambda_s\le W(\{x_{s+1},\ldots,x_n\})$.
It only takes a straightforward verification to show that, by reversing the order of elements,
all such numbers belong to $\text{FE}_{N,\Phi}(x_n,\ldots,x_1)$
where $N=W(\emptyset)$ and where $\Phi=(f_n\mid n\ge 2)$
is the sequence of functions where
$$f_{k+1}(x_1,\ldots,x_k):=W(\{x_1,\ldots,x_k\}).$$ 
Conversely, given $N\in\N$ and a sequence of functions $\Phi=(f_n)_{n\ge 2}$,
it is also checked in a straightforward manner that
every element of $\text{FE}_{N,\Phi}(a_1,\ldots,a_n)$
belongs to $\text{FEP}_W(a_n,\ldots,a_1)$ where $W$ is the
weight function 
$$W(\{x_1,\ldots,x_k\}):=
\max_{\sigma\in\mathfrak{S}_k}f_{k+1}(x_{\sigma(1)},\ldots,x_{\sigma(k)}).$$
($\mathfrak{S}_k$ denotes the set of all permutations of
$\{1,\ldots,k\}$.)
\end{remark}

\smallskip
Note that if $\Phi=(f_n\mid n\ge 2)$ and $\Psi=(g_n\mid n\ge 2)$ are two
sequences of functions where $f_n(x_1,\ldots,x_{n-1})\le g_n(x_1,\ldots,x_{n-1})$ 
for all $n,x_1,\ldots,x_{n-1}\in\N$, then clearly 
$\text{EXP}_{N,\Phi}(a_1,\ldots,a_k)\subseteq\text{EXP}_{N,\Psi}(a_1,\ldots,a_k)$
for every $k$, and hence
$\text{FE}_{N,\Phi}(a_n)_{n=1}^\infty\subseteq\text{FE}_{N,\Psi}(a_n)_{n=1}^\infty$.

\smallskip
The sets of Definition \ref{def-FENPhi}
are actually much larger than the sets of exponential patterns
seen in the previous subsection, provided that the considered functions $f_n$ are 
sufficiently ``fast-growing."

\begin{proposition}\label{FEinclusion}
If $\Phi=(f_n\mid n\ge 2)$ is the sequence of functions
$f_n:\N^{n-1}\to\N$ that are inductively defined by setting 
$$\begin{cases}
f_1(x_1)=1
\\
f_{n}(x_1,\ldots,x_{n-1})=\prod_{t=1}^{n-1}{x_{t}}^{f_t(x_1,\ldots,x_{t-1})}.
\end{cases}$$
then for every infinite sequence $(a_n)_{n\in\N}$ of natural numbers and for every $k$, one has 
$\text{EXP}(a_1,\ldots,a_k)\subseteq\text{EXP}_{1,\Phi}(a_1,\ldots,a_k)$, and hence 
$\text{FE}\,(a_n)_{n=1}^\infty\subseteq\text{FE}_{1,\Phi}(a_n)_{n=1}^\infty$.
\end{proposition}

\begin{proof}
This is a particular case of Proposition \ref{FEthetainclusion} in the next section 
(see Remark \ref{particularcase2}).
\end{proof}

\begin{example}\label{example-towers3}
Let $\Phi=(f_n\mid n\ge 2)$ where 
$f_2(x)={x}^{x^x}$.
In the definition of $\text{EXP}_{N,\Phi}(a_1,a_2,a_3)$
one has $\lambda_1\le N$ and $\lambda_2\le{a_1}^{{a_1}^{a_1}}$, 
and so, \emph{e.g}, the following towers of exponentiations belong to 
$\text{EXP}_{1,\Phi}(a_1,a_2,a_3)$, and hence to $\text{FE}_{1,\Phi}(a_n)_{n=1}^\infty$:  
$${a_3}^{{a_2}^2},\ {a_3}^{{a_2}^{a_1+1}},  
\ {a_3}^{{a_1}^{\!N} {a_2}^{a_1}},
\ {a_3}^{{a_2}^{{a_1}^{2}}},
\ {a_3}^{{a_2}^{{a_1}^{a_1}}}, \ {a_3}^{{a_2}^{{a_1}^{{a_1}^{a_1}}}}.$$
Notice that in general none of the above numbers belong to $\text{FE}(a_1,a_2,a_3)$.
\end{example}

\smallskip
We are finally ready to state our main result about 
partition regularity of exponential patterns in the natural numbers
(see Corollary \ref{cor-mainexp}).

\begin{theorem}[Main Theorem about exponentiation on $\N$]\label{exp}
Let $N\in\N$ and let $\Phi=(f_n\mid n\ge 2)$
be a sequence of functions $f_n:\N^{n-1}\to\N$.
For every finite coloring $\N=C_1\cup\ldots\cup C_r$ and for every non-trivial multiplicatively
closed subset $X\subseteq\N$, there exists an increasing sequence
$(a_n)_{n\in\N}$ such that $\text{FE}_{N,\Phi}(a_n)_{n=1}^\infty\subseteq X\cap C_i$
is monochromatic and included in $X$.
\end{theorem}

So, for example, we can require that the elements 
of the monochromatic configuration
are all powers of a fixed natural number $k\ge 2$, or are sums of two squares, or are 
congruent to 1 modulo $n$, and so on.

Here is a relevant consequence of the Main Theorem.

\begin{corollary}\label{cor-exp}
Let $g:\N\to\N$ be any (fast-growing) function. Then for every finite coloring 
of $\N$ and for every non-trivial multiplicatively
closed subset $X\subseteq\N$, there exists an increasing sequence
$(a_n)_{n\in\N}$ such that for all $n_1,n_2,\ldots,n_k<s<t$ where 
$k\le g(s-1)$,
the following elements are monochromatic and belong to $X$:
$${a_t}^ {   {a_s}^{    {a_{n_1}}^{   {a_{n_2}}^{\iddots^{a_{n_k}}   }    }    } }  $$
\end{corollary}

\begin{proof}
Given $g$, let $\Phi=(f_n\mid n\ge 2)$ be the sequence of functions
$f_n:\N^{n-1}\to\N$ where 
\begin{center}
\begin{tikzpicture}
\node at (0,0) {$f_n(x_1,\ldots,x_{n-1})={x_{n-1}}^{{x_{n-1}}^{\iddots^{x_{n-1}}}}$};
\node at (4,0) {$\bigg{\}}$ \ height $g(n-1)$};
\end{tikzpicture} 
\end{center}

Note that for every increasing sequence $(a_n)_{n\in\N}$ and for
all $n_1,n_2,\ldots,n_k<s<t$
where $k\le g(s-1)$, one has that the tower
$$x:= {a_t}^ {   {a_s}^{    {a_{n_1}}^{   {a_{n_2}}^{\iddots^{a_{n_k}}   }    }    } }  \in
\ \ \text{EXP}_{1,\Phi}(a_1,\ldots,a_t).$$
To see this, notice first that, since $(a_n)_{n\in\N}$ is increasing, 
we have that $a_{n_i}\le a_{s-1}$ for every $i=1,\ldots,k$. Then $x={a_t}^{{a_s}^{\lambda_s}}$ where:
$$\lambda_s= {a_{n_1}}^{{a_{n_2}}^{\iddots^{a_{n_k}}}} \le
\ {a_{s-1}}^{{a_{s-1}}^{\iddots^{a_{s-1}}}}
<\ f_s(a_1,\ldots,a_{s-1}).$$ 
To reach the thesis, apply the Main Theorem \ref{exp} where $N=1$ and the sequence of functions $\Phi$
is the one considered above.
\end{proof}

\smallskip
We pointed out already in Remark \ref{TE} that 
if $t>s>n_1>\ldots>n_k$
then all the towers considered above 
belong to $\text{TE}(a_n)_{n=1}^\infty$,
and hence to $\text{FE}(a_n)_{n=1}^\infty$.
However, we emphasize that in Corollary \ref{cor-exp}, indexes $n_1, n_2, \ldots, n_k$
are not assumed to be arranged in decreasing order, 
nor to be pairwise different; the only condition is that they are all smaller than $s$,
and that the number $k$ of indexes is bounded by the value $g(s-1)$.
For instance, for appropriate $g$, the following towers of exponentiations
can be considered, although 
they do not belong to $\text{FE}(a_n)_{n=1}^\infty$:
$${a_4}^{ {a_3}^{ {a_1}^{ {a_2}^{a_2}}}}\,;\ {a_5}^{ {a_4}^{ {a_1}^{ {a_1}^{ {a_2}^{a_3}}}}}$$

\smallskip
\section{Exponential-like operations}\label{sec-exponential-like}

In the previous section we saw that the exponentiation on natural numbers
satisfies the same infinitary partition regularity property as stated in Hindman's Theorem
(see Theorem \ref{exp}). In this section we show that that property
actually holds for a larger class of operations on commutative semigroups
that resemble the exponentiation.

\subsection{The Main Theorem}
We observe that, although \emph{non}-associative and also \emph{non}-commutative, 
exponentiation $a\star b=b^a$ is actually closely related to the
associative and commutative operation of product on $\N$.
Indeed, exponentiation is defined as an iterated product:
$$a\star b\ =\ b^a\ =\ \underbrace{\, b\cdot\,\ldots\,\cdot b\,}_{a \textrm{\,times}}.$$
In consequence, the operation of exponentiation ``distributes" with respect to product:
$(c^b)^a=c^{ab}$, \emph{i.e.} $a\star(b\star c)=(a\cdot b)\star c$.
This suggests to consider the following generalization:

\begin{definition}\label{def-exponential-like}
Let $\theta:(S,*)\to(\N,\cdot)$ be a homomorphism from a commutative semigroup $(S,*)$ 
to the multiplicative semigroup of the natural numbers.
The \emph{exponential-like} operation $*_\theta$ on $S$
induced by $\theta$ is defined by setting for all $a,b\in S$:
$$a*_\theta b\ =\ \underbrace{\, b*\,\ldots\,*b\,}_{\theta(a)\textrm{\,times}}.$$
\end{definition}

Note that if $\varepsilon$ is identity of $(S,*)$, then
$\varepsilon *_\theta x=x$ for all $x$, since 
$\theta$ is a homomorphism and hence $\theta(\varepsilon)=1$.

\begin{notation}\label{additivenotation}
For simplicity, in the following in several occasions we will adopt the additive notation for
the commutative semigroup $(S,*)$ and agree on the following.
\begin{itemize}
\item
For $x_1,\ldots,x_n\in S$, we write
$\sum_{i=1}^n x_i$ to denote the product $x_1*\ldots * x_n$.
\end{itemize}
\begin{itemize}
\item
For $\lambda\in\N$ and $x\in S$,
we write $\lambda x$ to denote the $\lambda$-th $*$-power of $x$, that is:
$$\lambda x\ =\ \underbrace{\,x*\ldots*x\,}_{\lambda\ \text{times}}.$$
\end{itemize}

The latter notation is extended to $\lambda=0$, in which case
we agree that $0 x=\varepsilon$ is the identity of $S$.
\end{notation}

We observe that for all $\lambda,\mu\in\N\cup\{0\}$ and $x\in S$:
\begin{itemize}
\item
$\lambda(\mu x)=(\lambda\cdot\mu)x$ 
where $\cdot$ denotes multiplication between natural numbers.
\end{itemize}

Note also that, according to the above notation:
\begin{itemize}
\item
$a*_\theta b = \theta(a) b$.
\end{itemize}

Similarly to exponentiation on natural numbers,
exponential-like operations distributes with respect to
the corresponding semigroup operation.

\begin{proposition}\label{distributivity}
Let $(S,*)$ be a commutative semigroup, let
$\theta:(S,*)\to(\N,\,\cdot\,)$ be a homomorphism,
and let $*_\theta$ be the induced exponential-like operation on $S$.
Then for all $a_1,a_2, b\in S$ we have
$$a_1*_\theta(a_2*_\theta b)\ =\ (a_1*a_2)*_\theta b.$$
\end{proposition}

\begin{proof}
By the definitions, and since $\theta$ is a homomorphism, we have
$a_1*_\theta(a_2*_\theta b)=
a_1*_\theta\theta(a_2) b=\theta(a_1)(\theta(a_2) b)=
(\theta(a_1)\cdot\theta(a_2))b=\theta(a_1*a_2) b=(a_1*a_2)*_\theta b$.
\end{proof}

Clearly, the basic example of an exponential-like operation
is the usual exponentiation between natural numbers.

\begin{example}
Consider the homo\-morphism $\theta:(\N,\,\cdot\,)\to(\N,\,\cdot\,)$
given by the identity $\theta:a\mapsto a$.
Then the corresponding exponential-like operation $\cdot_\theta$ on $\N$ is 
the usual exponentiation:
$$a \cdot_\theta b\ =\,\underbrace{\, b\cdot\,\ldots\,\cdot b\,}_{\theta(a)=a\textrm{\,times}}\,=\ b^a.$$
\end{example}

A relevant example of an exponential-like operation
related to the sum of $\N$ is the following.
(Several other examples will be given in Subsection \S\ref{sec-examples}.)

\begin{example}\label{example-explike}
Fix any integer $m\ge 2$, and let $\theta:(\N,+)\to(\N,\,\cdot\,)$ be the homomorphism 
$\theta:a\mapsto m^a$.
Then the corresponding exponential-like operation $+_\theta$ on $\N$ is given by:
$$a +_\theta b\ =\,\underbrace{\, b+\,\ldots\,+b\,}_{\theta(a)=m^a\textrm{\,times}}\,=\ m^a\cdot b.$$
\end{example}

\smallskip
In the following, $(S,*)$ will always be a commutative semigroup with $\varepsilon$
as its identity, $\theta:(S,*)\to(\N,\,\cdot\,)$ a homomorphism,
and $*_\theta$ be the induced exponential-like operation on $S$.

Given a sequence $(a_n)_{n\in\N}$ in $S$, we intend to define its set of ``finite $\theta$-expo\-nentiations"
similarly to how the set of finite sums $\text{FS}(a_n)_{n=1}^\infty$ is
defined in Hindman's Theorem.
Since the $*_\theta$-operation is neither commutative nor associative,
it is necessary to take into account the order in which the elements and the $*_\theta$-products 
are considered.
  
\smallskip
One possible natural way is to proceed as done in \S 1 with the exponentation
between natural numbers. 
(To avoid trivial $\theta$-exponentiations, we assume every $a_i\ne\varepsilon$.)

\smallskip
At step 1, let $\text{EXP}_\theta(a_1)=\{a_1\}$, 
and at step 2 set $\text{EXP}_\theta(a_1,a_2)=\{a_2, a_1*_\theta a_2\}$.
At the next step 3, we consider $a_3$, along with $* _\theta$-products
of elements from one of the previous steps 2 or 1 with $a_3$,
along with the triple $*_\theta$-products obtained
by taking the product of the element from step 1 with the product of
an element from step 2 and $a_3$.
So, we obtain the following set:
\begin{multline*}
\text{EXP}_\theta(a_1,a_2,a_3)=\Big{\{}
a_3,\, a_2*_\theta a_3,\, (a_1*_\theta a_2)*_\theta a_3,\, a_1*_\theta a_3,\, 
\\
a_1*_\theta(a_2*_\theta a_3),\,
a_1*_\theta((a_1*_\theta a_2)*_\theta a_3)\Big{\}}.
\end{multline*}

Note that, by using the distributivity property of Proposition \ref{distributivity},
we have that $a_1*_\theta(a_2*_\theta a_3)=(a_1*a_2)*_\theta a_3$ and
$a_1*_\theta((a_1*_\theta a_2)*_\theta a_3)=(a_1*(a_1*_\theta a_2))*_\theta a_3$.
So, every element in $\text{EXP}_\theta(a_1,a_2,a_3)$ can
be written in the form $(x_1*x_2)*_\theta a_3$ where
$x_1\in\text{EXP}_\theta(a_1)$ or $x_1=\varepsilon$,
and $x_2\in \text{EXP}_\theta(a_1,a_2)$ or $x_2=\varepsilon$.

This construction when iterated is formalized by the following

\smallskip
\begin{definition}\label{def-FEtheta}
For every sequence $(a_n)_{n\in\N}$ in $S$, 
the set of its \emph{finite $\theta$-exponentiations} is defined as the union:
$$\text{FE}_\theta(a_n)_{n=1}^\infty:=
\bigcup_{k=1}^\infty\text{EXP}_\theta(a_1,\ldots,a_k)$$
where $\text{EXP}_\theta(a_1)=\{a_1\}$, and inductively for $k\ge 2$:
\begin{multline*}
\text{EXP}_\theta(a_1,\ldots,a_k)=
\\
\left\{(x_1*\ldots * x_{k-1})*_\theta a_k\mid 
x_t\in\text{EXP}_\theta(a_1,\ldots,a_t)\cup\{\varepsilon\} 
\ \text{for}\ t=1,\ldots,k-1\right\}.
\end{multline*}
\end{definition}

Note that elements $x_t$ are allowed to be equal to the identity $\varepsilon$; 
this is necessary to include products in which only part of the previous steps 
$\text{EXP}_\theta(a_1,\ldots,a_i)$ are considered.
In particular, $a_k\in\text{EXP}_\theta(a_1,\ldots,a_k)$ because
$(\varepsilon*\ldots * \varepsilon)*_\theta a_k=\varepsilon*_\theta a_k=
\theta(\varepsilon)a_k=1 a_k=a_k$.

\smallskip
Another natural way of itemizing iterated $\theta$-exponentiations
is the analogue of the towers of exponentiations considered in \S\ref{sec-exp}.

\smallskip
\begin{definition}\label{def-TEtheta}
For every sequence $(a_n)_{n\in\N}$ in $S$, 
the set of its \emph{towers of $\theta$-exponentiations} is defined as the union:
$$\text{TE}_\theta(a_n)_{n=1}^\infty:=
\bigcup_{k=1}^\infty\text{EXP}'_\theta(a_1,\ldots,a_k)$$
where $\text{EXP}'_\theta(a_s)=\{a_s\}$ for every $s$,
and where inductively, for all $s_1<\ldots<s_m$,
\begin{multline*}
\text{EXP}'_\theta(a_{s_1},\ldots,a_{s_m}) =
\\
= \bigcup_{\ell<m}\left\{x*_\theta y\mid x\in\text{EXP}_\theta'(a_{s_1},\ldots,a_{s_\ell}),\,
y\in\text{EXP}'_\theta(a_{s_{\ell+1}},\ldots,a_{s_m})\right\}.
\end{multline*}
\end{definition}

\begin{remark}
It can be verified in a straightforward manner that
the set $\text{TE}_\theta(a_n)_{n=1}^\infty$ 
is precisely the set of all finite iterations of the $*_\theta$-operation
applied to elements from the sequence $(a_n)_{n\in\N}$ when
indexes are taken in decreasing order,
and where parentheses are placed in any possible manner that makes sense.
\end{remark}

\smallskip
We observe that all towers of exponentiations belongs to the set
of finite $\theta$-exponentiations, but not conversely.

\begin{proposition}\label{TEthetainclusion}
Let $(a_n)_{n\in\N}$ be a sequence in $S$. Then for every n:
$$\text{EXP}\,'_\theta(a_1,\ldots,a_n)\subseteq \text{EXP}_\theta(a_1,\ldots,a_n)$$
and the inclusion is strict when $n\ge 3$. Therefore,
$\text{TE}_\theta(a_n)_{n=1}^\infty\subsetneq\text{FE}_\theta(a_n)_{n=1}^\infty$.
\end{proposition}

\begin{proof}
We proceed by induction on $m$ and show that
$$\text{EXP}'_\theta(a_{s_1},\ldots,a_{s_m})\subseteq\text{EXP}_\theta(a_{s_1},\ldots,a_{s_m})$$
for all $1\le s_1<\ldots<s_m\le n$. 
The base $m=1$ is trivial from the definitions. At the inductive step, pick
$x\in\text{EXP}'_\theta(a_{s_1},\ldots,a_{s_\ell})$ and 
$y\in\text{EXP}'_\theta(a_{s_{\ell+1}},\ldots,a_{s_m})$
for some $\ell<m$. By the inductive hypothesis, 
$x\in\text{EXP}_\theta(a_{s_1},\ldots,a_{s_\ell})$ and 
$y\in\text{EXP}_\theta(a_{s_{\ell+1}},\ldots,a_{s_m})$; then
we can write $y=(y'_{\ell+1}*\ldots *y'_{m-1})*_\theta a_{s_m}$
where $y'_t\in\text{EXP}_\theta(a_{s_{\ell+1}},\ldots,a_{s_t})\cup\{\varepsilon\}$.
By distributivity,
$$x*_\theta y=x*_\theta((y'_{\ell+1}*\ldots *y'_{m-1})*_\theta a_{s_m})=
(x*y'_{\ell+1}*\ldots *y'_{m-1})*_\theta a_{s_m}$$
and we can conclude that $x*_\theta y\in\text{EXP}_\theta(a_{s_1},\ldots,a_{s_\ell})$.

As for the strict inclusion, note that the element
$a_1*_\theta((a_1*_\theta a_2)*_\theta a_3)$ belongs to $\text{EXP}_\theta(a_1,a_2,a_3)$
but is not a $\theta$-tower of exponentiations, since the
element $a_1$ is repeated.
(Recall that we are assuming every $a_i\ne\varepsilon$.)
\end{proof}

\begin{remark}\label{particularcase}
In the particular case when $\theta:(\N,\,\cdot\,)\to(\N,\,\cdot\,)$ is the identity
and the induced operation $*_\theta$ is the exponentiation on the natural numbers,
it is readily seen that
the sets $\text{EXP}_\theta(a_1,\ldots,a_n)$ and $\text{FE}_\theta(a_n)_{n=1}^\infty$
of Definition \ref{def-FEtheta} are the same as the sets
$\text{EXP}(a_1,\ldots,a_n)$ and $\text{FE}(a_n)_{n=1}^\infty$ of Definition \ref{def-FE}.
Likewise, the sets $\text{EXP}\,'_\theta(a_1,\ldots,a_n)$ and $\text{TE}_\theta(a_n)_{n=1}^\infty$
of Definition \ref{def-TEtheta} are the same as the sets
$\text{EXP}'(a_1,\ldots,a_n)$ and $\text{TE}(a_n)_{n=1}^\infty$ of Definition \ref{def-TE}.
So, Proposition \ref{TEinclusion} is just a particular case of the above Proposition \ref{TEthetainclusion}.
\end{remark}

The following is a useful characterization of the sets of finite $\theta$-exponentiations.

\begin{lemma}\label{Lambda}
Let $(a_n)_{n\in\N}$ be a sequence in $S$, and inductively define:
\begin{itemize}
\item
$\Lambda_1=\{1\}$;
\item
$\Lambda_k=\left\{\prod_{t=1}^{k-1}\theta(a_t)^{\lambda_t}\,\big{|}\,
\lambda_t\in\Lambda_t\cup\{0\},\, t=1,\ldots,k-1\right\}$. 
\end{itemize}
Then for every $k\in\N$:
\begin{multline*}
\text{EXP}_\theta(a_1,\ldots,a_k)=
\\
\left\{\left(\sum_{t=1}^{k-1}\lambda_t a_t\right)*_\theta a_k\ \bigg{|}\,\ 
\lambda_t\in\Lambda_t\cup\{0\},\,t=1,\ldots,k-1\right\}=
\{\lambda_k a_k\mid \lambda_k\in\Lambda_k\}.
\end{multline*}
\end{lemma}

\begin{proof}
We proceed by induction on $k$. The base step $k=1$ is trivially satisfied, so let $k\ge 2$.
By the definition, $z\in\text{EXP}_\theta(a_1,\ldots,a_k)$
if and only if $z=(x_1*\ldots * x_{k-1})*_\theta a_k$
for appropriate elements $x_t\in\text{EXP}_\theta(a_1,\ldots,a_t)\cup\{\varepsilon\}$.
By the inductive hypothesis, for every $t=1,\ldots,k-1$,
we have $x_t=\lambda_t a_t$ 
for an appropriate $\lambda_t\in\Lambda_t\cup\{0\}$, and so
$z=(\sum_{t=1}^{k-1}\lambda_t a_t)*_\theta a_k$ as desired.\footnote
{~When $k=1$, we agree that the empty sum $\sum_{t=1}^{k-1}\lambda_t a_t=\varepsilon$ 
is the identity of $S$. Recall also that for every $a\in S$, we have agreed that
$0 a=\varepsilon$ is the identity of $S$.}
Finally, note that, since $\theta:(S,*)\to(\N,\,\cdot\,)$ is a homomorphism, 
by definition of $a*_\theta b=\theta(a) b$ we have
$$\left(\sum_{t=1}^{k-1}\lambda_t a_t\right)*_\theta a_k\ =\ 
\theta\!\left(\sum_{t=1}^{k-1}\lambda_t a_t\right) a_k\ =\ 
\left(\prod_{t=1}^{k-1}\theta(a_t)^{\lambda_t}\right) a_k\ =\ \lambda_k a_k$$
where $\lambda_k=\prod_{t=1}^{k-1}\theta(a_t)^{\lambda_t}\in\Lambda_k$.
\end{proof}

\smallskip
As seen above, the set of finite $\theta$-exponentiations
$\text{FE}_\theta(a_n)_{n=1}^\infty$ is sufficiently large to include 
all possible iterations of the $*_\theta$-operations where
elements are taken in decreasing order.
However, in our main result we will consider even larger sets.

\begin{definition}\label{def-FEthetaNPhi}
For every sequence $(a_n)_{n\in\N}$ in $S$, for every $N\in\N$,
and for every sequence $\Phi=(f_n\mid n\ge 2)$ of functions $f_n:\N^{n-1}\to\N$, 
the set $\text{FE}_{\theta,N,\Phi}(a_n)_{n=1}^\infty$
of \emph{finite $(N,\Phi)$-$\theta$-exponentiations} is defined as the union:
$$\text{FE}_{\theta,N,\Phi}(a_n)_{n=1}^\infty:=
\bigcup_{k=1}^\infty\text{EXP}_{\theta,N,\Phi}(a_1,\ldots,a_k)$$
where $\text{EXP}_{\theta,N,\Phi}(a_1)=\{a_1\}$, and for $k\ge 2$:
\begin{multline*}
\text{EXP}_{\theta,N,\Phi}(a_1,\ldots,a_k) =
\\
= \left\{\left(\sum_{t=1}^{k-1}\lambda_t a_t\right)\!*_\theta a_k\ \bigg{|}\ 
0\le\lambda_1\le N,\, 0\leq \lambda_t\leq f_t(a_1,\ldots,a_{t-1})\ 
\text{for}\ 2\le t\le k-1\right\}.
\end{multline*}
\end{definition}

\smallskip
Next, we will show that the above sets, for appropriate $\Phi$, 
are indeed larger than the set of finite $\theta$-exponentiations.

\begin{proposition}\label{FEthetainclusion}
Let $(S,*)$ be a commutative semigroup, let $\theta:(S,*)\to(\N,\,\cdot\,)$
be a homomorphism, and let $*_\theta$ be the induced exponential-like operation on $S$.
If $\Phi=(f_n\mid n\ge 2)$ is the sequence of functions
$f_n:\N^{n-1}\to\N$ that are inductively defined by setting 
$$\begin{cases}
f_1(x_1)=1
\\
f_{n}(x_1,\ldots,x_{n-1})=\prod_{t=1}^{n-1}{\theta(x_{t})}^{f_t(x_1,\ldots,x_{t-1})},
\end{cases}$$
then for every infinite sequence $(a_n)_{n\in\N}$ in $S$ and for every $k$, one has 
$\text{EXP}_\theta(a_1,\ldots,a_k)\subseteq\text{EXP}_{\theta,1,\Phi}(a_1,\ldots,a_k)$, and hence 
$\text{FE}_\theta(a_n)_{n=1}^\infty\subseteq\text{FE}_{\theta,1,\Phi}(a_n)_{n=1}^\infty$.
\end{proposition}

\begin{proof}
Given $(a_n)_{n\in\N}$, let $\mu_1=1$ and for $n\ge 2$
recursively define $\mu_k=\prod_{t=1}^{k-1}{\theta(a_t)}^{\mu_t}$.
It holds that $\mu_k=f_k(a_1,\ldots,a_{k-1})$ for every $k\ge 2$. 
In addition, notice that $\mu_k=\max\Lambda_k$, where the sets $\Lambda_k$'s are 
those of Proposition \ref{Lambda}. 
Indeed, $\mu_1=1$ is the greatest element of $\Lambda_1=\{1\}$;
and at the inductive step $k$ one applies the inductive hypothesis to get
$$\max\Lambda_k=\max\left\{\prod_{t=1}^{k-1}\theta(a_t)^{\lambda_t}\,\bigg{|}\,
\lambda_t\in\Lambda_t\cup\{0\},\, t=1,\ldots,k-1\right\}=\prod_{t=1}^{k-1}\theta(a_t)^{\mu_t}=\mu_k.$$

Now the desired inclusion 
$\text{EXP}_\theta(a_1,\ldots,a_k)\subseteq\text{EXP}_{1,\Phi,\theta}(a_1,\ldots,a_k)$
easily follows. Indeed, one uses the following characterization of
$\text{EXP}_\theta(a_1,\ldots,a_k)$ given in Lemma \ref{Lambda}, where
coefficients $\lambda_t\in\Lambda_t\cup\{0\}$, and hence they satisfy the inequality
$0\le\lambda_t\le\max\Lambda_t=\mu_t=f_t(a_1,\ldots,a_{t-1})$.
$$\text{EXP}_\theta(a_1,\ldots,a_k)=
\left\{\left(\sum_{t=1}^{k-1}\lambda_t a_t\right)*_\theta a_k\ \bigg{|}\,\ 
\lambda_t\in\Lambda_t\cup\{0\},\,t=1,\ldots,k-1\right\}.$$
\end{proof}

\begin{remark}\label{particularcase2}
In the particular case when $*_\theta$ is the exponentiation on the natural numbers,
it is easy to check that the sets $\text{EXP}_{\theta,N,\Phi}(a_1,\ldots,a_n)$ and 
$\text{FE}_{\theta,N,\Phi}(a_n)_{n=1}^\infty$
of Definition \ref{def-FEthetaNPhi} are the same as the sets
$\text{EXP}_{N,\Phi}(a_1,\ldots,a_n)$ and 
$\text{FE}_{N,\Phi}(a_n)_{n=1}^\infty$ of Definition \ref{def-FENPhi}.
So, Proposition \ref{FEinclusion} is
just a particular case of Proposition \ref{FEthetainclusion}.
\end{remark}

Our main result, in the style of Hindman's Theorem, concerns the existence of an 
infinite sequence such that all the finite $\theta$-exponentiations of its elements are monochromatic. 
Clearly, to obtain a meaningful result and avoid possible trivial cases, the sequence must be 1-1. 
To this end, the semigroup under consideration must satisfy appropriate properties.

\begin{definition}\label{def-weaklycancellative}
A commutative semigroup $(S,*)$ is \emph{weakly cancellative} if for all $a,b\in S$
the set $\{x\in S\mid a* x=b\}$ is finite.\footnote
{~As the name indicates, this is a weakening of the notion of cancellativity.
Recall that a commutative semigroup $(S,*)$ is \emph{cancellative} if $xa=xb$ implies that $a=b$. 
See Definitions 4.30 and 1.14 in \cite{hs}, where
the left- and right-hand versions of these notions are introduced and studied
in the general context of semigroups (not necessarily commutative).}
\end{definition}

\begin{definition}\label{def-rootfinite}
A semigroup $(S,*)$ is \emph{root-finite} if for every $a\in S$ and for every $k\in\N$,
the set $\{x\in S\mid x^k=a\}$ is finite.
\end{definition}

Note that there are numerous examples of semigroups that are both weakly cancellative and root-finite; 
\emph{e.g.}, $(\N,+)$, $(\N,\,\cdot\,)$, torsion-free abelian groups, finitely generated Abelian groups, \emph{etc.} 
Indeed, it is only in the context of root-finite semigroups 
that exponential-like operations can behave similarly to the usual 
exponentiation between natural numbers.

Recall that a function $f:A\to B$ is called \emph{finite-to-one} if all
fibers $f^{-1}(\{b\})=\{a\in A\mid f(a)=b\}$ are finite.
A useful observation that we will use is the following.

\begin{proposition}\label{rootfinitepreserving}
Let $\theta:(S,*)\to(S',*')$ be a finite-to-one homomorphism between commutative semigroups.
If $S'$ is weakly cancellative and root-finite then
$S$ is also weakly cancellative and root-finite.
\end{proposition}

\begin{proof}
Assume for the sake of contradiction that there exist $a,b\in S$
such that the set $\Gamma:=\{x\in S\mid a*x=b\}$ is infinite.
Since $\theta$ is finite-to-one, also the image
$\theta(\Gamma)=\{\theta(x)\mid a*x=b\}\subseteq
\{\theta(x)\mid \theta(a)*'\theta(x)=\theta(b)\}\subseteq\{y\in S'\mid \theta(a)*'y=\theta(b)\}$ is infinite,
against the weak cancellativity of $S'$.
Similarly, assume that there exist $k\in\N$ and $a\in S$ such that the
set $\Xi:=\{x\in S\mid k x=a\}$ is infinite (here we are using the additive notation).
Since $\theta$ is finite-to-one, also the set
$\theta(\Xi)=\{\theta(x)\mid kx=a\}\subseteq\{\theta(x)\mid k\theta(x)=\theta(a)\}\subseteq
\{y\in S'\mid k y=\theta(a)\}$ is infinite,
against the root-finiteness of $S'$.
\end{proof}

\smallskip
We are finally ready to state our main result.

\begin{theorem}[Main Theorem]\label{main}
Let $(S,*)$ be an infinite commutative weakly cancellative and root-finite semigroup
with identity,
let $\theta:(S,*)\to(\N,\,\cdot\,)$ be a homomorphism, and
let $*_\theta$ be the induced exponential-like operation on $S$.
Then for every finite coloring $S=C_1\cup\ldots\cup C_r$, for every $N\in\N$, and
for every sequence $\Phi=(f_n\mid n\ge 2)$ 
of functions $f_n:\N^{n-1}\to\N$,
there exists a 1-1 sequence $(a_n)_{n\in\N}$ such that
$\text{FE}_{\theta,N,\Phi}(a_n)_{n=1}^\infty\subseteq C_i$ is monochromatic.
\end{theorem}

As a corollary, we obtain 

\begin{corollary}[Main Theorem about exponentiation on $\N$]\label{cor-mainexp}
Let $N\in\N$ and let $\Phi=(f_n\mid n\ge 2)$
be a sequence of functions $f_n:\N^{n-1}\to\N$.
For every finite coloring $\N=C_1\cup\ldots\cup C_r$ and for every non-trivial
multiplicatively closed $X\subseteq\N$, there exists an increasing sequence
$(a_n)_{n\in\N}$ such that $\text{FE}_{N,\Phi}(a_n)_{n=1}^\infty\subseteq X\cap C_i$ is monochromatic.
\end{corollary}

\begin{proof}
By the hypothesis, $(X,\,\cdot\,)$ is an infinite sub-semigroup of $(\N,\,\cdot\,)$, and so the
inclusion map $\theta=\imath:(X,\,\cdot\,)\to(\N,\,\cdot\,)$ is a homo\-morphism.
The induced exponential-like operation on $X$ is the exponentiation
$a\cdot_\theta b=b^a$, and so we can apply the Main Theorem above.
Indeed, the semigroup $(\N,\,\cdot\,)$, and hence also the
sub-semigroup $(X,\,\cdot\,)$, is weakly cancellative and root-finite.
Finally, by passing to a subsequence if necessary,
we can assume the 1-1 sequence $(a_n)_{n\in\N}$ to be increasing.
\end{proof}

Another infinite monochromatic configuration on $\N$ 
provided by the Main Theorem is obtained by considering 
the exponential-like operation of Example \ref{example-explike}.

\begin{theorem}\label{2^ab}
Let $N\in\N$ and let $\Phi=(f_n\mid n\ge 2)$
be a sequence of functions $f_n:\N^{n-1}\to\N$.
For every finite coloring of $\N$ and for every $m\ge 2$
there exists an increasing sequence
$(a_n)_{n\in\N}$ such that the following infinite configuration
is monochromatic:
$$\left\{{a_k}\cdot {m}^{\,\sum_{t=1}^{k-1}\lambda_t a_t}\ \bigg{|}\ 
n\in\N,\ 0\le\lambda_1\le N,\ 0\leq \lambda_t\leq f_t(a_1,\ldots,a_{t-1})\ 
\text{for}\ 2\le t\le k-1\right\}.$$
\end{theorem}

\begin{proof}
Note that if $\theta:(\N,+)\to(\N,\,\cdot\,)$ is the homomorphism $\theta(a)=m^a$,
then the above infinite configuration is $\text{FE}_{\theta,N,\Phi}(a_n)_{n=1}^\infty$.
Since $(\N,+)$ is weakly cancellative and root-finite, we reach the thesis
by applying the Main Theorem to the exponential-like operation $a+_\theta b= m^a\cdot b$.
\end{proof}

As a consequence, we also obtain infinite monochromatic configurations from an increasing sequence
where all indexes except the first two can be taken in arbitrary order, and even repeated.
The proof is entirely similar to that of Corollary \ref{cor-exp}, and so we omit it.

\begin{corollary}\label{cor-2^ab}
Let $g:\N\to\N$ be any (fast-growing) function. Then for every finite coloring 
of $\N$ and for every $m\ge 2$ there exists an increasing sequence
$(a_n)_{n\in\N}$ such that for all indexes $n_1,n_2,\ldots,n_k<s<t$ where 
$k\le g(s-1)$, the following elements are monochromatic:
$${a_t}\cdot m^{a_s\cdot m^{a_{n_k}\cdot {m^{{}^{\iddots^{a_{n_2}\cdot m^{a_{n_1}}}}}}}}$$
\end{corollary}

\smallskip
\subsection{Examples of exponential-like operations}\label{sec-examples}

In this subsection we will review several examples of exponential-like operations
to which the Main Theorem can be applied; of course, many others can be considered.

\smallskip
The first two basic examples have been presented already.

\begin{example}\label{ex:exp}
Let $\theta:(\N,\,\cdot\,)\to(\N,\,\cdot\,)$ be the homomorphism given by the identity map.
The induced exponential-like operation on $\N$ is the usual exponentiation:
$$a*_\theta b\ =\ b^a.$$
\end{example}

\begin{example}\label{example-powerm}
Given $m\ge 2$, let $\theta:(\N,+)\to(\N,\,\cdot\,)$ be the homomorphism given by the 
map $\theta(a)=m^a$. The induced exponential-like operation on $\N$ is the following:
$$a*_\theta b\ =\ m^a\cdot b.$$
\end{example}

For simplicity, in the examples of exponential-like operations $*_\theta$ that follow,
we will present only the corresponding triples $\text{EXP}_\theta(a,b)=\{a, b, a*_\theta b\}$,
so as to give an idea of the kind of monochromatic patterns provided by our Main Theorem.

\begin{example}\label{example-mpowers}
Fix any $m\ge 2$, and let $\theta:(\N,\,\cdot\,)\to(\N,\,\cdot\,)$
be the homomorphism where $\theta(a)=a^m$. 
The induced exponential-like operation on $\N$ is 
$$a*_\theta b\ =\ {b}^{\,a^m}.$$
Then the following triple is monochromatic on $\N$:
$$\left\{a,\,b,\,{b}^{\,a^m}\right\}.$$
\end{example}

\begin{example}
For a prime $p$, consider the $p$-\emph{adic valuation} $v_p:\N\to\N_0$ where
$v_p(a)=\max\{k\in\N_0\mid p^k\mid a\}$. For all primes $p$ and $q$,
the function $\theta(a)=q^{v_p(a)}$ is a homomorphism from $(\N,\,\cdot\,)$ to $(\N,\,\cdot\,)$,
and the induced exponential-like operation on $\N$ is:
$$a*_\theta b\ =\ {b}^{\,q^{\,v_p(a)}}.$$
Then for all primes $p,q$ the following triple is monochromatic on $\N$:
$$\left\{a,\,b,\,{b}^{\,q^{\,v_p(a)}}\right\}.$$
\emph{E.g.}, when $p=2$ and $q=3$, for every odd number $b$ and for all $a\in\N_0$ and $c\in\N$ 
we have that $(2^a b)*_\theta c=c^{\,3^a}$, and so the triples of the following form where $b$ 
is odd are monochromatic on $\N$:
$$\left\{2^a b,\,c,\,{c}^{\,3^a}\right\}.$$
\end{example}

\begin{example}\label{example-Omega}
Recall the \emph{Omega function} $\Omega$, defined by setting $\Omega(n)$ 
to be the total number of prime numbers in the factorization of $n$ counted with their multiplicity.
The function $\theta(n)=2^{\Omega(n)}$ is a homomorphism 
$\theta:(\N,\,\cdot\,)\to(\N,\,\cdot\,)$, and the induced exponential-like operation is:
$$a*_\theta b={b}^{\,2^{\Omega(a)}}.$$
Then the following triple is monochromatic in $\N$:
$$\left\{ a,\,b,\, {b}^{\,2^{\Omega(a)}}\right\}.$$
\end{example}

\begin{example}\label{example-sumproduct}
The following operation is associative, commutative, and cancellative on $\N$:
$$a*b=a+b+ab.$$
More generally, for every $\ell,k\in\N$ where $k^2\equiv k\ (\text{mod}\,\ell)$,
the following operation on $\N$ is associative, commutative, and cancellative:\footnote
{~These operations and the infinite monochromatic configurations they originate
have been studied in \cite{dn}, where they are denoted $\lcirc{\raisebox{-0.3ex}{\huge$\star$}}_{\ell,k}$.}
$$a*b=\frac{1}{\ell}\left((\ell a+k)(\ell b+k)-k\right).$$
Equivalently, $a*b=c\Leftrightarrow (\ell a+k)(\ell b+k)=(\ell c+k)$.
The function $\theta(a)=\ell a+k$ is a homomorphism
$\theta:(\N,*)\to(\N,\,\cdot\,)$, and the induced exponential-like operation is:
``$a*_\theta b=c\Leftrightarrow (\ell a+k)^{\ell b+k}=\ell c+k$," that is:
$$a*_\theta b\ =\ \frac{1}{\ell}( (\ell a+k)^{\ell b+k}-k).$$
Since the homomorphism $\theta$ is 1-1, the semigroup $(S,*)$ is root-finite.

Then for all $\ell,k\in\N$ where $k^2\equiv k\ (\text{mod}\,\ell)$,
the following triple is monochromatic on $\N$:
$$\left\{a,\,b,\,\frac{1}{\ell}( (\ell a+k)^{\ell b+k}-k)\right\}.$$
\emph{E.g.}, when $\ell=k=1$, we have that the triple 
$\{a, b, (a+1)^{b+1}-1\}$ is monochromatic.
\end{example}
 
\begin{example}
The commutative semigroup of
$2\times 2$ \emph{circulant matrices} 
$$\begin{pmatrix} a & b \\ b & a \end{pmatrix}$$
with entries $a>b$ in $\N$ can be seen as a commutative semigroup 
$(S,*)$ on the ``lower diagonal" $S:=\{(a,b)\in\N^2\mid a>b\}$,
where operation $*$ corresponds to matrix multiplication: 
$$(a,b)*(c,d)=(ac+bd,ad+bc)\ \Leftrightarrow\  
\begin{pmatrix} a & b \\ b & a \end{pmatrix}\cdot\begin{pmatrix} c & d \\ d & c \end{pmatrix}=
\begin{pmatrix} ac+bd & ad+bc \\ ad+bc & ac+bd \end{pmatrix}.$$
The determinant induces the map $\theta:(a,b)\mapsto a^2-b^2$, which
is a homomorphism from $(S,*)$ to $(\N,\,\cdot\,)$.
Note that $\theta$ is finite-to-one. Since $(\N,\,\cdot\,)$ is weakly
cancellative and root-finite, by Proposition \ref{rootfinitepreserving} 
$(S,*)$ is also, and hence the Main Theorem applies.
A straightforward computation shows that
\begin{multline*}
(a,b)*_\theta(c,d)\ =\ \underbrace{(c,d)*\ldots *(c,d)}_{a^2-b^2\ \text{times}}\ =
\\
=\ \left(\sum_{t\in\mathbb{E}}{{a^2-b^2}\choose{t}}c^{a^2-b^2-t}d^t,\,
\sum_{t\in\mathbb{O}}{{a^2-b^2}\choose{t}}c^{a^2-b^2-t}d^t\right)
\end{multline*}
where $\mathbb{E}$ and $\mathbb{O}$ are the sets of even and of odd numbers
in $\{0, 1, \ldots, a^2-b^2\}$, respectively. We observe that 
$$(a,b)*_\theta(c,d)=(n,m)\ \Leftrightarrow\ 
n+m=(c+d)^{a^2-b^2}\ \text{and}\ n-m=(c-d)^{a^2-b^2}.$$
Then, by applying the function $(x,y)\mapsto(x-y,x+y)$ to the monochromatic triple
$\{(a,b), (c,d), (a,b)*_\theta(c,d)\}$ in $S$, we obtain that the triple
$$\left\{\begin{pmatrix} a-b \\ a+b\end{pmatrix},\, \begin{pmatrix} c-d \\ c+d\end{pmatrix},\,
\begin{pmatrix} (c-d)^{a^2-b^2} \\ (c+d)^{a^2-b^2} \end{pmatrix}\right\}.$$
is monochromatic in $\N^2$.
By changing variables $s=a-b$, $t=a+b$, $u=c-d$, $v=c+d$,
this means that the following triple is monochromatic in $\N^2$:
$$\left\{\begin{pmatrix} s \\ t\end{pmatrix},\, \begin{pmatrix} u \\ v\end{pmatrix},\,
\begin{pmatrix} u^{st} \\ v^{st} \end{pmatrix}\right\}.$$
So, we obtain the following general property:\footnote
{~Here we are using the following well-known property:
If a family $\mathcal{A}\subseteq\mathcal{P}(S)$ is partition regular on $S$
then for every function $f:S\to T$ the ``image" family 
$\{\{f(a)\mid a\in A\}\mid A\in\mathcal{A}\}$ is partition regular on $T$. 
In fact, given a finite partition $T=C_1\cup\ldots\cup C_r$,
consider the induced finite partition
$S=f^{-1}(C_1)\cup\ldots\cup f^{-1}(C_r)$. 
By partition regularity of $\mathcal{A}$ there exists 
$A\in\mathcal{A}$ with $A\subseteq f^{-1}(C_i)$ for a suitable $i$, and hence
$\{f(a)\mid a\in A\}\subseteq C_i$.}
\begin{itemize}
\item
\emph{For every $G:\N^2\to\N$, the following triple is monochromatic on $\N$:
$$\left\{\,G(s,t); G(u,v); G(u^{st}, v^{st})\,\right\}.$$}
\end{itemize}
\emph{E.g.}, when $G(s,t)=s\cdot t$ is the product, we obtain the 
already known exponential monochromatic triple
$\{st, uv, (uv)^{st}\}$; and when $G(s,t)=s+t$ is the sum,
we obtain the monochromatic triple $\{s+t, u+v, u^{st}+v^{st}\}$; and so on.
\end{example}

The following proposition indicates a general method
to produce exponential-like operations. The proof is straightforward and is omitted.

\begin{proposition}\label{multiplicativelyclosed}
Let $S=(s_n\mid n\in\N)$ be a 1-1 enumeration
of a multiplicatively closed subset $S\subseteq\N$. Then:
\begin{enumerate}
\item
The following operation $*$ on $\N$ is associative and commutative:
$$a*b=c\ \Leftrightarrow\ s_a\cdot s_b=s_c.$$ 
\item
If $\theta:(S,*)\to(\N,\,\cdot\,)$ is the homomorphism given by the inclusion,
the induced exponential-like operation $*_\theta$ on $\N$ is defined by:
$$a*_\theta b=c\ \Leftrightarrow\ s_c={s_b}^{\,s_a}.$$
\end{enumerate}
\end{proposition}

\begin{example}
For every $m\ge 2$, let $\text{Exp}_m=\{m^a\mid a\in\N\}$ be the set of 
powers of $m$, which is multiplicatively closed.
The map $a\mapsto m^a$ is a 1-1 enumeration of $\text{Exp}_m$, and
the corresponding associative and commutative operation 
as defined in Proposition \ref{multiplicativelyclosed} is the usual sum: 
``$a* b=c\Leftrightarrow m^a\cdot m^b=m^c\Leftrightarrow a+b=c$."
The homomorphism $\theta:(\text{Exp}_m,*)\to(\N,\,\cdot\,)$ given by the inclusion
induces the exponential-like operation
``$a*_\theta b=c\Leftrightarrow m^c=(m^b)^{m^a}$," which is
the operation already considered in Example \ref{example-powerm}, namely
``$a*_\theta b\ =\ {m^a}\cdot b.$"
\end{example}

\begin{example}
For every $m\ge 2$, let $S_m=\{a^m\mid a\in\N\}$ be the set of $m$-th powers,
which is multiplicatively closed.
Clearly $a\mapsto a^m$ is a 1-1 enumeration of $S_m$, and
the corresponding associative and commutative operation 
as defined in Proposition \ref{multiplicativelyclosed} is the usual product: 
``$a* b=c\Leftrightarrow a^m\cdot b^m=c^m\Leftrightarrow a\cdot b$."
The homomorphism $\theta:(S_m,*)\to(\N,\,\cdot\,)$ given by the inclusion
induces the exponential-like operation
``$a*_\theta b=c\Leftrightarrow c^m=(b^m)^{a^m}=(b^{a^m})^m$," which is
the operation already considered in Example \ref{example-mpowers}, namely
``$a*_\theta b\ =\ {b}^{\,a^m}.$"
\end{example}

\begin{example}
The set $S\subseteq\N$ of all hypotenuses $z$ of Pythagorean triples $(x,y,z)$
is multiplicatively closed.\footnote{
~Indeed if $z^2=x^2+y^2$ and $z'^2=x'^2+y'^2$
then $(zz')^2=(xx'-yy')^2+(xy'+x'y)^2$.}
Enumerate $S=(s_n\mid n\in\N)$ in increasing order. 
The exponential operation $*_\theta$ on $\N$ as
given by Proposition \ref{multiplicativelyclosed} is described as follows: 
if $s_a$ is the $a$-th hypotenuse and $s_b$ is the $b$-th hypotenuse in the list $(s_n\mid n\in\N)$,
then $a*_\theta b$ is the $({s_b}^{s_a})$-th hypotenuse in that list.
Then we obtain that triples of the following kind are monochromatic on $\N$:
$$\{a,\,b,\,c\}\quad\text{where}\quad s_c={s_b}^{s_a}.$$
\end{example}

\smallskip
Further examples are obtained in a similar way starting from
any other multiplicatively closed subset of $\N$.
For example, for any $m\ge 2$ one can consider the set
$S_1=\{a\mid a\equiv 1 \mod m\}$, or the set
$S_2=\{a\mid a\equiv m+1 \mod (m^2+m)\}$, or the set
$S_3=\{a\mid a\equiv m^2\mod (m^2+m)\}$, and so on.  
See \cite{dpr} for several examples along these lines.

\medskip
In the next examples, we will consider exponential-like operations 
defined on products of semigroups. We outline first
the general procedure that we will follow.

For $i=1,\ldots,k$, let $(S_i,*_i)$ be a commutative 
weakly cancellative and root-finite infinite semigroup, 
and let $\theta_i:(S_i,*_i)\to(\N,\,\cdot\,)$ be a homomorphism.
Consider the commutative semigroup $(S_1\times\ldots\times S_k,*)$ 
on the Cartesian product where the operation is defined coordinate-wise:
$$(a_1,\ldots,a_k)*(b_1,\ldots,b_k)=(a_1 *_1 b_1,\ldots, a_k *_k b_k).$$
It is easily verified that $(S_1\times\ldots\times S_k,*)$ is also weakly cancellative and
root-finite. For every nonempty $F\subseteq\{1,\ldots,k\}$, the map 
$\theta_F:(a_1,\ldots,a_k)\mapsto \prod_{i\in F}\theta_i(a_i)$
is a homomorphism from $(S_1\times\ldots\times S_k,*)$ to $(\N,\,\cdot\,)$.
The induced exponential-like operation on $S_1\times\ldots\times S_k$ is defined by:
$$\begin{pmatrix} a_1 \\ \ldots \\ a_k \end{pmatrix} *_\theta \begin{pmatrix} b_1 \\ \ldots \\ b_k \end{pmatrix}\ =\ 
\begin{pmatrix} {b_1}^{\theta_F(a_1,\ldots,a_k)} \\ \ldots \\ {b_k}^{\theta_F(a_1,\ldots,a_k)}\end{pmatrix}$$
where we denoted ${b_i}^n=\underbrace{\,b_i *_i\,\cdots\,*_i b_i\,}_{n\ \text{times}}$.
Then, by the particular case of our Main Theorem when only $\theta$-exponential triples
are considered, we obtain that the following pattern is monochromatic on $S_1\times\ldots\times S_k$:
$$\left\{\begin{pmatrix} a_1 \\ \ldots \\ a_k \end{pmatrix},\ 
\begin{pmatrix} b_1 \\ \ldots \\ b_k \end{pmatrix},\ 
\begin{pmatrix} {b_1}^{\theta_F(a_1,\ldots,a_k)} \\ \ldots \\ {b_k}^{\theta_F(a_1,\ldots,a_k)}\end{pmatrix}
\right\}$$

\medskip
The next examples are particular cases of the above construction.

\begin{example}
Let $(\N^2,*)$ be the commutative semigroup where
$(a,b)*(c,d)=(a+c,b+d)$.
\begin{itemize}
\item
The map $\theta(a,b)=2^a$ is a homomorphism
from $(\N^2,*)$ to $(\N,\,\cdot\,)$.
The induced exponential-like operation on $\N^2$ yields 
the following monochromatic triple in $\N^2$:
$$\left\{\begin{pmatrix} a \\ b\end{pmatrix},\, \begin{pmatrix} c \\ d\end{pmatrix},\,
\begin{pmatrix} 2^ac \\ 2^ad \end{pmatrix}\right\}.$$
\item
The map $\theta(a,b)=2^a 3^b$ is a homomorphism
from $(\N^2,*)$ to $(\N,\,\cdot\,)$.
The induced exponential-like operation on $\N^2$ yields 
the following monochromatic triple in $\N^2$:
$$\left\{\begin{pmatrix} a \\ b\end{pmatrix},\, \begin{pmatrix} c \\ d\end{pmatrix},\,
\begin{pmatrix} 2^a 3^b c \\ 2^a 3^b d \end{pmatrix}\right\}.$$
\end{itemize}
\end{example}

The exponential-like operations seen in the previous example
were obtained starting from the semigroup structure on $\N^2$
given by the coordinate-wise sum. 
The following examples are similar and are obtained by considering
coordinate-wise operations determined by sum and product.
Note that this yields weakly cancellative root-finite semigroups, and so the Main Theorem applies.

\begin{example}
Let $(\N^2,*)$ be the commutative semigroup where 
$(a,b)*(c,d)=(a+c,bd)$.
\begin{itemize}
\item
The map $\theta_1:(a,b)\mapsto 2^a$ is a homomorphism
from $(\N^2,*)$ to $(\N,\,\cdot\,)$, and
the induced exponential-like operation yields
the following monochromatic triple in $\N^2$:
$$\left\{\begin{pmatrix} a \\ b\end{pmatrix},\, \begin{pmatrix} c \\ d\end{pmatrix},\,
\begin{pmatrix} 2^ac \\ d^{\,2^a} \end{pmatrix}\right\}.$$
\item
The map $\theta_2:(a,b)\mapsto b$ is a homomorphism
from $(\N^2,*)$ to $(\N,\,\cdot\,)$, and the
induced exponential-like operation yields
the following monochromatic triple in $\N^2$:
$$\left\{\begin{pmatrix} a \\ b\end{pmatrix},\, \begin{pmatrix} c \\ d\end{pmatrix},\,
\begin{pmatrix} bc \\ d^b \end{pmatrix}\right\}.$$
\item
The map $\theta:(a,b)\mapsto 2^a b$ is a homomorphism
from $(\N^2,*)$ to $(\N,\,\cdot\,)$, and the induced exponential-like operation yields
the following monochromatic triple in $\N^2$:
$$\left\{\begin{pmatrix} a \\ b\end{pmatrix},\, \begin{pmatrix} c \\ d\end{pmatrix},\,
\begin{pmatrix} 2^a b c \\ d^{\,2^ab} \end{pmatrix}\right\}.$$
\end{itemize}
\end{example}

Other examples can be easily obtained in same fashion by also considering
other associative operations such as $(n,m)\mapsto nm+n+m$.

%
%

\smallskip
Note that the previous examples can be generalized to infinite products.
In fact, the following property holds (the proof is straightforward and is omitted).

\begin{proposition}\label{infiniteproducts}
Let $I$ be an infinite set.
For every $i\in I$, let $(S_i,*_i)$ be a commutative semigroup with identity $\varepsilon_i$, 
and let $\theta_i:(S_i,*_i)\to(\N,\,\cdot\,)$ be a homomorphism. Consider the set
$S_\infty$ of all $I$-sequences $\mathbf{a}=(a_i)$
where $a_i\in S_i$ for every $i$ and whose support 
$$\text{Supp}(\mathbf{a}):=\{i\in I\mid a_i\ne \varepsilon_i\}$$
is finite, and let $\star$ be the coordinate-wise operation:
$(a_i)\star(b_i)=(a_i*_ib_i)$.
Then $\theta_\infty:\mathbf{a}\mapsto\prod_i \theta_i(a_i)$ is a homomorphism
from the commutative semiring $(S_\infty,\star)$ to $(\N,\,\cdot\,)$, 
and the induced exponential-like operation $\star_{\theta_\infty}$ on $S_\infty$ is the following:
$$\mathbf{a}\star_{\theta_\infty}\mathbf{b}\ =\ \left(\prod_{i\in I}\theta_i(a_i)\right)\!\mathbf{b}$$ 
where for $\lambda\in\N$ and $\mathbf{b}\in S$ we denoted
$\lambda\mathbf{b}=(\lambda\cdot b_i)$.
Besides, if all semigroups $(S_i,*_i)$ are weakly cancellative and root-finite,
then $(S_\infty,\star)$ is also weakly cancellative and root-finite.
\end{proposition}

Let us see a relevant example.

\begin{example}
Let $(S_\infty,+)$ be the commutative semiring where $S_\infty$
is the set of all sequences $\mathbf{a}=(a_n)\in{\N_0}^\N$ of non-negative integers whose support 
$$\text{Supp}(\mathbf{a}):=\{n\in\N\mid a_n\ne 0\}$$
is finite, and where $+$ is the coordinate-wise sum.
Fix any sequence $\mathbf{x}=(x_n)\in\N^\N$ of natural numbers.
The map $\theta:(S_\infty,+)\to(\N,\,\cdot\,)$ where 
$$\theta(\mathbf{a})=\prod_{n\in\N}{x_n}^{a_n}$$
is a homomorphism, and the induced exponential-like operation 
yields the following monochromatic triple in $S$:\footnote
{~This is the particular case of Proposition \ref{infiniteproducts}
where $I=\N$, and where $\theta_n(y)={x_n}^y$ for every $n$.}
$$\left\{\mathbf{a},\ \mathbf{b},\ \left(\prod_{n\in\N}{x_n}^{a_n}\right)\!\mathbf{b}\right\}.$$

\smallskip
Now let $(p_n)$ be the increasing sequence of prime numbers,
and for every $\mathbf{a}\in S_\infty$ let $a\in\N$
be the number whose factorization is $a=\prod_n{p_n}^{a_n}$.
It is readily seen that the function $\mathbf{a}\mapsto a$ is a semigroup isomorphism 
between $(S_\infty,+)$ and $(\N,\,\cdot\,)$, whose inverse map
is given by the sequence of $p_n$-adic valuations: 
$a\mapsto(v_{p_n}(a))$. Note that for every $\lambda\in\N$
one has that $\lambda\mathbf{a}=a^\lambda$.
By applying this isomorphism, we obtain the following monochromatic triple in $\N$:
$$\left\{a,\ b,\ b^{\,\prod_{n\in\N}{x_n}^{v_{p_n}(a)}}\right\}.$$

By varying the sequence $(x_n)$, numerous examples are obtained
of monochromatic triples in $\N$.
\emph{E.g.}, if $(x_n)=(p_n)$ is the sequence of prime numbers,
we obtain the exponential triple $\{a, b, b^a\}$; and
if $(x_n)$ is the constant sequence with value $2$, we obtain
the monochromatic triple $\{a,b, b^{2^{\Omega(a)}}\}$ already seen in Example \ref{example-Omega},
and so on.
\end{example}
   
\smallskip
\section{Recalls to algebra of ultrafilters and central sets}\label{sec-central}

The main tool for the proof of the Main Theorem are the \emph{central sets}, 
a special class of sets endowed with a particularly
rich combinatorial structure.
Central sets of natural numbers were originally introduced by H. Furstenberg \cite{fu} in 1981 
in the context of topological dynamics.
Several years later, in 1990, V. Bergelson and N. Hindman \cite{bh} proved that 
the same class of sets could be also be defined in an apparently disjoint context 
as members of \emph{minimal idempotent ultrafilters}.
Indeed, minimal ultrafilters precisely correspond to uniformly recurrent points
in the dynamical system $(\beta\N, S)$ given by the space of ultrafilters on $\N$
endowed with the ``shift" map $S(\U)=1\oplus\U$.
Here we follow the ultrafilter approach, through which 
the notion of central sets is naturally defined 
for arbitrary semigroups, and many relevant 
combinatorial properties have been demonstrated.

Below, we briefly recall some of the basic notions and facts
about \emph{algebra of ultrafilters},
limiting ourselves to the minimum that is needed for our purposes.
For a comprehensive treatment of the subject, we refer the reader to the 
comprehensive monograph \cite{hs}.

\subsection{Algebra on ultrafilters}
In the following, we assume the reader to be familiar with ultrafilters.

\smallskip
Let us first recall two fundamental constructions to produce ultrafilters from given ones.
\begin{itemize}
\item
Given an ultrafilter $\U$ on a set $I$ and a function $f:I\to J$,
the \emph{image ultrafilter} of $\U$ under $f$ is the ultrafilter on $J$ defined by setting
$$f(\U):=\{A\subseteq J\mid f^{-1}(A)\in\U\}.$$
\item
Given ultrafilters $\U$ and $\V$ on a set $I$, their \emph{tensor product}
is the following ultrafilter on the Cartesian product $I\times I$:
$$\U\otimes\V:=\{X\subseteq I\times I\mid\{i\in I\mid\{j\in I\mid(i,j)\in X\}\in\V\}\in\U\}.$$
\end{itemize}

\smallskip
There is a natural way to extend the sum operation on $\N$ to
an operation defined on the set $\beta\N$ of all ultrafilters on $\N$. 
Precisely, one defines the ultrafilter $\U\oplus\V$ by setting, for every $A\subseteq\N$,
$$A\in\U\oplus\V\Leftrightarrow \{n\in\N\mid A-n\in\V\}\in\U,$$
where $A-n=\{m\in\N\mid m+n\in A\}$ is the $n$-shift of $A$.
Note that $\U\oplus\V$ is the image ultrafilter $S(\U\otimes\V)$
of the tensor product $\U\otimes\V$ under the sum function $S(n,m)=n+m$.
The operation $\oplus$ turns out to be associative, but not commutative.
By identifying each natural number $n$ with the corresponding
principal ultrafilter $\U_n=\{A\subseteq\N\mid n\in A\}$,
we observe that one has $\U_n\oplus\U_m=\U_{n+m}$, and so
the semigroup $(\beta\N,\oplus)$ is an actual extension of $(\N,+)$.

The space of ultrafilters $\beta\N$ comes with the natural topology 
where the following sets $\mathcal{O}_A$ with $A\subseteq\N$ form a base of clopen sets:
$$\mathcal{O}_A:=\{\U\in\beta\N\mid A\in\U\}.$$
The resulting topological space is a Hausdorff and compact space
where $\N$ (identified with the set of principal ultrafilters $\U_n$) is dense.
Such a topological space $\beta\N$ can be characterized 
as the \emph{Stone-\v{C}ech compactification} of the discrete space $\N$.
Endowed with the above topology, the semigroup $(\beta\N,\oplus)$
has the structure of a \emph{right compact topological semigroup}, 
\emph{i.e.}, a compact topological semigroup where, for every $\V\in\beta\N$,
the operation ``on the right" $\psi_\V:\U\mapsto\U\oplus\V$ 
is a continuous function.

The above construction grounded on the natural numbers with the sum,
can be carried through starting from any semigroup $(S,*)$ to produce 
a \emph{right compact topological semigroup} $(\beta S,\circledast)$
on the topological space of ultrafilters on $S$ where the operation
is defined as follows:
$$A\in\U\circledast\V\Leftrightarrow \{s\in S\mid\{t\in S\mid s*t\in A\}\in\V\}\in\U.$$

The following is a classical result in the theory of semigroups.

\begin{itemize}
\item
\textbf{Ellis's Lemma}. 
\emph{In every right compact topological semigroup there exist idempotent elements.}
\end{itemize}

In particular, for every semigroup $(S,*)$
there exist idempotent elements in $(\beta S,\circledast)$, that is,
ultrafilters $\U$ such that $\U\circledast\U=\U$.
That class of ultrafilters has the utmost importance in applications
to combinatorics, beginning with the following fact, first pointed out by F. Galvin
in the case of $(S,*)=(\N,+)$:\footnote
{~When F. Galvin isolated that property in 1970, 
the existence of idempotent ultrafilters in $(\beta\N,\oplus)$ was still an open problem.}

\begin{itemize}
\item
(Galvin 1970). \emph{Let $\U$ be an idempotent ultrafilter in $(\beta S,\circledast)$. 
For every set $A\in\U$
there exists an infinite sequence $(a_n)_{n\in\N}$ in $S$ such that the set of its finite products}
$$\text{FP}(a_n)_{n=1}^\infty:=
\left\{a_{n_1}*\ldots*a_{n_k}\,\Big|\, n_1<\ldots<n_k\,\right\}\subseteq A.$$
\end{itemize}

Note that one obtains a proof of \emph{Hindman's Theorem} for general semigroups
by simply picking any idempotent ultrafilter $\U$ in $(\beta S,\circledast)$, and then observing
that in any finite coloring $S=C_1\cup\ldots\cup C_r$ one of the colors $C_i\in\U$.

\begin{theorem}[Hindman for semigroups]\label{hindmansemigroups}
Let $(S,*)$ be a semigroup. For every finite coloring $S=C_1\cup\ldots\cup C_r$
there exists an infinite sequence $(a_n)_{n\in\N}$ in $S$ such that the set of its finite products
$\text{FP}(a_n)_{n=1}^\infty\subseteq C_i$ is monochromatic.
\end{theorem}

We remark that to ensure that the sequence $(a_n)_{n\in\N}$ is 1-1, 
the semigroup $(S,*)$ must satisfy additional properties 
(weak cancellativity is sufficient, see the comments at the beginning of \S \ref{sec-proof}).

\subsection{Minimal ideals and central sets}
Let $(S,*)$ be a semigroup and $(\beta S,\circledast)$ its
Stone-\v{C}ech compactification.
A set $L\subseteq\beta S$ is a \emph{left ideal} if $\U\circledast\V\in L$ for every 
$\U\in\beta S$ and for every $\V\in L$.
The notion of \emph{right ideal} is defined similarly.
A left ideal $L$ is called \emph{minimal} if it does not properly contain any left ideal.
If $L$ is a minimal left ideal and $\V\in L$ is any element of its,
then by minimality it must be $L=\psi_\V(\beta S)$, and hence
$L$ is a closed subspace. In consequence, by a
straight application of \emph{Zorn's Lemma}, it is proved that every left ideal 
includes a minimal left ideal. 
An ultrafilter $\U$ is called \emph{minimal} if it belongs
to a minimal left ideal. 

\begin{remark}
The existence of minimal ultrafilters in $(\beta S,\circledast)$ that are non-principal 
requires additional ``mild" assumptions about the semigroup $S$; 
for example, that $(S,*)$ contains no finite two-sided ideals. 
Note that this last condition holds if $S$ is weakly cancellative.
\end{remark}

Sets that belong to minimal ultrafilters
have a rich combinatorial structure; \emph{e.g.}, consider the following property:

\begin{itemize}
\item
\emph{Let $\U$ be a minimal ultrafilter of $(\beta\N,\oplus)$. 
For every set $A\in\U$
and for every $\ell$ there exists an $\ell$-term arithmetic
progression $a, a+d, \ldots, a+(\ell-1)d \in A$.}
\end{itemize}

We observe that if $L$ is a minimal left ideal 
then $L$ is both topologically and algebraically closed.
In consequence, $L$ inherits from $(\beta S,\circledast)$
the structure of a right compact topological semigroup, and one can
apply \emph{Ellis' Lemma} to $(L,\circledast)$ to get the existence
of \emph{minimal idempotent} ultrafilters.

\begin{itemize}
\item
\emph{Let $L$ be a minimal left ideal of $(\beta S,\circledast)$.
Then there exist idempotent ultra\-filters $\U\in L$.}
\end{itemize}

We are finally ready to introduce the fundamental class of sets considered in this paper.

\begin{definition}
Let $(S,*)$ be an infinite semigroup. A set $A\subseteq S$ is \emph{central} if it belongs to 
a minimal idempotent ultrafilter of $(\beta S,\circledast)$.
\end{definition}

Since they share properties of both members of idempotent ultrafilters 
and members of minimal ultrafilters, it is not surprising that central sets 
are a really strong tool in combinatorics.
For example, as already mentioned, a set $A$ belonging 
to a minimal ultrafilter in $(\beta\N,\oplus)$ contains arithmetic progressions 
of arbitrary length. 
If the minimal ultrafilter is also idempotent, \emph{i.e.},
if the set $A$ is central, then we can require that the 
common differences of the progressions also belong to $A$. 
This property is basically what we will need in the proof of our Main Theorem
(see Remark \ref{braueridempotent}).

The \emph{Central Sets Theorem} describes explicitly the 
combinatorial richness of these sets.
The first version of that theorem for central sets of natural numbers
was proved by H. Furstenberg in \cite{fu}, 
but in the following years stronger versions were shown to hold.
In particular, we will use the following version proved by
D. De, N. Hindman and D. Strauss \cite{dhs} in 2008 in the
general context of commutative semigroups.

For every set $X$, we denote by $\text{Fin}(X)$ the family of its finite subsets.
For nonempty sets $A,B\subseteq\N$ we write $A<B$ to mean that $a<b$ for all $a\in A$ and $b\in B$.
We convene that $\emptyset<A$ for every nonempty $A\subseteq\N$.

Since we are dealing with commutative semigroups, for simplicity we will
adopt the additive notation. We denote by $S^\N$ the set
of all sequences $(s_n\mid n\in\N)$ of elements of $S$.

\begin{theorem}[Central Sets Theorem -- 2008 \cite{dhs}]\label{CST}
Let $(S,+)$ be a commutative semigroup, and let
$A\subseteq S$ be a central set. Then there exist maps 
$$\alpha:\text{Fin}(S^\N)\to S\quad{and}\quad
H:\text{Fin}(S^\N)\to\text{Fin}(\N)$$
such that

\begin{itemize}
\item 
$H(F)<H(G)$ for all $F\subsetneq G$ in $\text{Fin}(S^\N)$;

\smallskip
\item 
For every increasing sequence $G_1\subsetneq G_2\subsetneq\ldots\subsetneq G_n$
in $\text{Fin}(S^\N)$ and for every choice of functions $f_i\in G_i$ for $i=1,\ldots,n$, one has
$$\sum_{i=1}^n\left(\alpha(G_i)+\!\!\sum_{t\in H(G_i)}\!\!\!\!f_i(t)\right)\in A$$
\end{itemize}
\end{theorem}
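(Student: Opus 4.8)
The plan is to exploit the two ingredients packaged into the hypothesis that $A$ is central: that $A$ lies in an idempotent ultrafilter, and that this idempotent is minimal. Fix a minimal idempotent $p$ with $A\in p$. The workhorse is the standard idempotent fact: writing $A^\star=\{x\in A\mid A-x\in p\}$, where $A-x=\{y\mid x+y\in A\}$, one has $A^\star\in p$, and moreover $A^\star-x\in p$ for every $x\in A^\star$. This is precisely what allows a partial sum already lying in $A^\star$ to be extended while staying inside $A^\star$. I would isolate from the start the following simultaneous-sum lemma, which is where minimality will be spent: for every $B\in p$, every finite $F\subseteq\N^\N$, and every bound $M$, there exist $\alpha\in\N$ and a finite nonempty $H\subseteq(M,\infty)$ such that $\alpha+\sum_{t\in H}f(t)\in B$ holds for \emph{all} $f\in F$ simultaneously. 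The single pair $(\alpha,H)$ serving every $f\in F$ at once is the crucial point.

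Granting the lemma, I would construct $\alpha$ and $H$ by recursion on the cardinality $m=|F|$, which sidesteps the fact that $\text{Fin}(\N^\N)$ is uncountable: distinct sets of the same cardinality are $\subsetneq$-incomparable, so at stage $m$ the values for the (possibly uncountably many) sets of size $m$ may be chosen independently, subject only to coherence with the already-defined values on smaller sets. The invariant I would maintain is that for every chain $G_1\subsetneq\cdots\subsetneq G_n$ with $|G_n|\le m$ and every choice $f_i\in G_i$, the sum $\sum_{i=1}^n(\alpha(G_i)+\sum_{t\in H(G_i)}f_i(t))$ lies in $A^\star$, not merely in $A$. At stage $m$, for a given $F$ with $|F|=m$, I would collect the finite set $P$ of all prefix sums coming from chains $G_1\subsetneq\cdots\subsetneq G_{n-1}\subsetneq F$, together with $0$; by the invariant each nonzero $s\in P$ lies in $A^\star$, so $B:=\bigcap_{s\in P}(A^\star-s)\in p$, with $B\subseteq A^\star$ and $s+B\subseteq A^\star$ for each $s\in P$. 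Applying the lemma to this $B$, to $F$, and to the bound $M=\max\{\max H(G)\mid G\subsetneq F\}$ yields $\alpha(F)$ and $H(F)$. The condition $H(F)\subseteq(M,\infty)$ gives $\max H(G)<\min H(F)$ for all $G\subsetneq F$, which is the first bullet; and since $\alpha(F)+\sum_{t\in H(F)}f(t)\in B$ for each $f\in F$, feeding any prefix sum $s\in P$ into $s+B\subseteq A^\star$ extends the invariant to chains topped by $F$. As $A^\star\subseteq A$, the second bullet follows.

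The real content, and the step I expect to be the main obstacle, is the simultaneous-sum lemma, because the functions in $F$ are completely arbitrary: one cannot force the individual increments $f(t)$ into prescribed members of $p$, so the naive strategy of choosing indices one at a time by reloading $B^\star$ breaks down. Here I would use minimality in the form that $B$, being a member of a minimal idempotent, is piecewise syndetic, so finitely many shifts of $B$ cover a thick set. Given $F=\{f_1,\dots,f_k\}$ and $M$, I would examine a sufficiently long interval of indices $t>M$, track the vector of partial sums $(\sum_t f_1(t),\dots,\sum_t f_k(t))$ as the right endpoint advances, and run a pigeonhole argument: along a thick block on which a fixed shift of $B$ is dense, two endpoints whose partial-sum profiles repeat in the right way determine a block $H$ between them and a single corrective constant $\alpha$ for which $\alpha+\sum_{t\in H}f_i(t)\in B$ for every $i$ at once. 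Choosing the interval long enough that the finitely many possible profiles must recur compatibly is the delicate part; it is exactly the classical fact that piecewise syndetic sets are $J$-sets.

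I would close with a remark explaining why the separate additive constants $\alpha(G_i)$ are indispensable: already for a constant function $f\equiv 0$ no nonempty sum $\sum_{t\in H}f(t)$ can be steered into a nonprincipal $B$, and it is the freedom to add $\alpha$ that absorbs such degeneracies and, more generally, reconciles the discrepancies between the $k$ functions.
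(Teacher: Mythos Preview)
The paper does not prove Theorem~\ref{CST}: it is quoted from De, Hindman, and Strauss \cite{dhs} and then invoked as a black box in Section~\ref{sec-proof}. There is therefore no proof in the paper to compare your proposal against.

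For what it is worth, your outline matches the architecture of the original De--Hindman--Strauss argument quite closely: a recursion on $|F|$ that maintains an $A^\star$-valued invariant along $\subsetneq$-chains, fed at each stage by a ``simultaneous-sum'' lemma asserting that every member of a minimal idempotent is a $J$-set. The recursive layer, including the observation that sets of equal cardinality are $\subsetneq$-incomparable so that the uncountably many choices at stage $m$ are independent, is correct as you describe it. The one soft spot is your heuristic for the $J$-set lemma itself. The ``pigeonhole on partial-sum profiles'' does not work as stated: the partial sums $\sum_{t\le n}f_i(t)$ are unbounded, and even if one colours them by which shift of $B$ they fall in and finds two indices $n_1<n_2$ with matching colour vectors, this does not produce a \emph{single} constant $\alpha$ that simultaneously corrects all $k$ differences $\sum_{n_1<t\le n_2}f_i(t)$ into $B$. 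The actual proof at this point passes through the Hales--Jewett theorem (equivalently, Gallai's multidimensional van der Waerden theorem). You do ultimately name the step correctly as ``the classical fact that piecewise syndetic sets are $J$-sets,'' so this is an unconvincing sketch of a known lemma rather than a structural flaw in the overall plan.
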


The reader interested in central sets will find much useful information, 
including interesting historical observations on the development of research on the subject, 
in N. Hindman's article \cite{hi20}. 

The following direct consequence of the Central Sets Theorem
will be the most relevant for the proof of our Main Theorem.

\begin{corollary}\label{brauerforcentralsets}
Let $(S, +)$ be a commutative semigroup, and let 
$A\subseteq S$ be a central set. 
Then for every $\ell\in\N$ there exist $a,d$ such that 
$a, d, a+d, \ldots, a+\ell d\in A$.
\end{corollary}
   
\begin{proof}
Since $A$ belongs to an idempotent ultrafilter,
there exists a function $f:\N\to S$ such that the set of finite sums
$\text{FS}(f)=\left\{\sum_{t\in F}f(t)\mid \emptyset\ne F\in\text{Fin}(\N)\right\}\subseteq A$.
Let $G_1:=\{f, 2f, \ldots, (\ell+1) f\}$ where $kf:\N\to S$ is the function
where $(kf)(n)=k f(n)$. Then define
$a:=\alpha(G_1)+\sum_{t\in H(G_1)}f(t)$ and $d:=\sum_{t\in H(G_1)}f(t)$, where
$\alpha:\text{Fin}(S^\N)\to S$ and $H:\text{Fin}(S^\N)\to\text{Fin}(\N)$ 
are the maps as given by the Central Sets Theorem. Then it is easily verified
that $a, d, a+d, \ldots, a+\ell d\in A$, as desired.
\end{proof}

\begin{remark}
In the case of natural numbers, the previous corollary states that
minimal idempotent ultrafilters are ``witnesses" not only to
Hindman's Theorem, but also to Brauer's Theorem  \cite{br},
a strengthening of van der Waerden's Theorem where
the common distance also belongs to the same color of the progression.
\begin{itemize}
\item
\textbf{Brauer's Theorem.}
\emph{Let $\N=C_1\cup\ldots\cup C_r$. Then there exists a color $C_i$
such that for every
$\ell$ there exists a pattern $a,d,a+d,\ldots,a+(\ell-1)d\in C_i$.}
\end{itemize}
\end{remark}

\section{Proof of the main theorem}\label{sec-proof}

We observe that in Hindman's Theorem \ref{hindmansemigroups} for general semigroups,
one cannot require the sequence $(a_n)_{n\in\N}$ to be 1-1.\footnote
{~See \cite{gt} where this issue is addressed.}
To avoid trivial cases,
a natural assumption is to consider semigroups $(S,*)$ 
where non-principal ultrafilter $\beta S\setminus S$ form
a sub-semigroup of the Stone-\v{C}ech compactification $(\beta S,\circledast)$.
Under this assumption, one can always take a non-principal idempotent 
ultrafilter on $S$ and ensure that the sequence $(a_n)_{n\in\N}$ constructed 
with the usual Galvin-Glazer proof is 1-1 (see \cite[Thm. 5.8]{hs}).
Note that non-principal ultrafilters form a sub-semigroup
in most relevant examples; in particular, this is the case when 
the initial semigroup $S$ is weakly left cancellative
(see \cite[\S 4.3]{hs} for details).

Also in our Main Theorem we will need an extra hypothesis
if we want to ensure the existence of a sequence $(a_n)_{n\in\N}$ which is 1-1,
namely root-finiteness.
Indeed, it is only in the context of root-finite semigroups 
that exponential-like functions can behave similarly to the usual 
exponentiation between natural numbers.

\begin{theorem}\label{mainmain}
Let $(S,*)$ be a commutative root-finite semigroup with identity, 
let $\theta:(S,*)\to(\N,\,\cdot\,)$ be a
homomorphism, and let $*_\theta$ be the induced exponential-like operation on $S$.
Suppose that $\U$ is a non-principal minimal idempotent ultrafilter in the
Stone-\v{C}ech compactification $(\beta S,\circledast)$,
and let $\V=:\Theta(\U\otimes\U)$ be the image ultrafilter
under the map $\Theta(a,b)=a*_\theta b$.
Then for every $A\in\V$, for every $N\in\N$, 
and for every sequence $\Phi=(f_n\mid n\ge 2)$ 
of functions $f_n:\N^{n-1}\to\N$, there exists a 1-1 sequence $(a_n)_{n\in\N}$ in $S$
such that $\text{FE}_{\theta,N,\Phi}(a_n)_{n=1}^\infty\subseteq A$.
\end{theorem}

\begin{remark}
Note that if $\U$ is non-principal then also $\V=\Theta(\U\otimes\U)$ is non-principal.
Indeed, assume for the sake of contradiction that
$\V$ is the principal ultrafilter determined by an element $a\in S$.
Then the preimage $\Theta^{-1}(\{a\})=\{(x,y)\in S\times S\mid x*_\theta y=a\}\in\U\otimes\U$.
Since $\U$ is non-principal, it can be shown that there exists a 1-1 sequence $(s_n\mid n\in\N)$ in $S$
such that $(s_n,s_m)\in\Theta^{-1}(\{a\})$, \emph{i.e.},
$\theta(s_n) s_m=a$ (in additive notation) for all $n<m$, against the hypothesis of $S$ root-finite.\footnote
{~This is a particular case of the following well-known property: 
Let $X\subseteq S\times S$; then $X\in\U\otimes\U$ for some non-principal ultrafilter $\U$ on $S$
if and only if there exists a 1-1 sequence $(s_n\mid n\in\N)$ such that $(s_n,s_m)\in X$ for all $n<m$.} 
\end{remark}

As a corollary of the previous result, we obtain the

\medskip
\noindent
\textbf{Main Theorem} (Theorem  \ref{main})\textbf{.}\ 
\emph{Let $(S,*)$ be an infinite commutative weakly cancellative and 
root-finite semigroup with identity,
let $\theta:(S,*)\to(\N,\,\cdot\,)$ be a homomorphism, and
let $*_\theta$ be the induced exponential-like operation on $S$.
Then for every finite coloring $S=C_1\cup\ldots\cup C_r$, for every $N\in\N$, and
for every sequence $\Phi=(f_n\mid n\ge 2)$ 
of functions $f_n:\N^{n-1}\to\N$,
there exists a 1-1 sequence $(a_n)_{n\in\N}$ such that
$\text{FE}_{\theta,N,\Phi}(a_n)_{n=1}^\infty\subseteq C_i$ is monochromatic.}

\begin{proof}[Proof of the Main Theorem]
Weak cancellativity of $(S,*)$ ensures that the space
of non-principal ultrafilters on $S$ form a sub-semigroup of
$(\beta S,\circledast)$; in fact, $S$ is weakly cancellative if and only if
the non-principal ultrafilters $\beta S\setminus S$ form a left ideal of $(\beta S,\circledast)$
(see \cite[Theorem 4.31]{hs}). Since $\beta S\setminus S$ is closed,
and hence compact, there exists a non-principal
minimal idempotent ultrafilter $\U$. Let $\V=:\Theta(\U\otimes\U)$ be the image ultrafilter
under the map $\Theta(a,b)=a*_\theta b$, and let $C_i$ be the color such that $C_i\in\V$.
Then apply the previous theorem with $A=C_i$.
\end{proof}

We observe that Theorem \ref{mainmain} actually shows a combinatorial property
of central sets.

\begin{theorem}
Let $B$ be any central set of the commutative weakly cancellative and root-finite 
infinite semigroup $(S,*)$ with identity.
Then for every homomorphism $\theta:(S,*)\to(\N,\,\cdot\,)$,
for every finite coloring of $S$,
for every $N\in\N$, and for every sequence $\Phi=(f_n\mid n\ge 2)$ 
of functions $f_n:\N^{n-1}\to\N$, there exist sequences $(b_n)_{n\in\N},(c_n)_{n\in\N}\subseteq B$ 
such that the infinite pattern
$\text{FE}_{\theta,N,\Phi}(\theta(b_n) c_n)_{n=1}^\infty$ is monochromatic,
and all its elements are of the form $\theta(x) y$ for suitable $x,y\in B$.
\end{theorem}

\begin{proof}
Pick a non-principal minimal idempotent ultrafilter $\U$ in $(\beta S,\circledast)$
with $B\in\U$, and let $\V=\Theta(\U\otimes\U)$ where $\Theta(a,b)=a*_\theta b=\theta(a) b$.
Given a finite partition $S=C_1\cup\ldots\cup C_r$, let $C _j$ 
be the color such that $C_j\in\V$. Since $B\in\U$, we have that 
$$\Theta(B\times B)=\{b*_\theta c\mid b,c\in B\}\in\V.$$
Then apply Theorem \ref{mainmain} and get the existence of a 1-1 sequence $(a_n)_{n\in\N}$ in $S$ such that 
$\text{FE}_{\theta,N,\Phi}(a_n)_{n=1}^\infty\subseteq C_j\cap \Theta(B\times B)$.
For every $n$, the element $a_n\in \Theta(B\times B)$, and so we can pick $b_n,c_n\in B$ 
such that $a_n=b_n*_\theta c_n=\theta(b_n) c_n$.
Then the sequences $(b_n)_{n\in\N},(c_n)_{n\in\N}\subseteq B$ have the desired properties.
\end{proof}

\subsection{The special case of exponentiation on natural numbers}

The proof of Theorem \ref{mainmain} is quite technical and rich in detail. 
In fact, the generality of the result for arbitrary exponential-like operations 
requires complex notation that can distract the reader from the ideas behind the 
construction. 
The most relevant special case of our result is the exponentiation on natural numbers, 
which is obtained when $(S,\ast)=(\N,\,\cdot\,)$ and
$\theta$ is the identity. For this case it is possible to provide a simplified 
and ``more readable” proof that is presented below.

\begin{theorem}[Special case on exponentiation on $\N$ -- simplified version]\label{exp}
For every finite coloring of $\N$ there exists an increasing sequence
$(a_n)_{n\in\N}$ such that $\text{FE}(a_n)_{n=1}^\infty$ is monochromatic.
\end{theorem}

\begin{proof}
For sake of simplicity, we just construct the first three terms $a_1<a_2<a_3$ of the sequence,
and show that all elements of the corresponding set of exponentiations
\[a_1,\: a_2,\: a_3,\: a_2^{a_1},\: a_3^{a_1},\: a_3^{a_2},\: a_3^{a_2a_1},\: a_3^{a_2^{a_1}}, \:a_3^{a_2^{a_1}a_1}\]
are monochromatic. 
By inductively iterating the procedure, one can then obtain the increasing sequence $(a_n)_{n\in\N}$
such that all finite exponentiations in $\text{FE}(a_n)_{n=1}^\infty$ are monochromatic.

\smallskip
Pick any minimal idempotent ultrafilter $\U$ in $(\beta\N, \odot)$,
and let $\V\coloneq \Theta(\U\otimes\U)$ be the image ultrafilter
of the tensor product $\U\otimes\U$ under the exponential function
$\Theta(a, b)= b^a$.
For $B\subseteq\N$ and $n\in \N$, we denote:
\begin{itemize}
\item
$B/n=\{k\in\N\mid n\cdot k\in B\}$.
\item
$B'=\{n\in\N\mid B/n\in \U\}$.
\item 
$B_n=\{k\in \N\mid k^n\in B\}$.
\item 
$\widehat{B}=\{n\in \N\mid B_n\in\U\}$.
\end{itemize}

Since $\U$ is multiplicatively idempotent, one has $B\in \U\Leftrightarrow B'\in \U$. Note also 
that, by the definition of $\V=\Theta(\U\otimes\U)$, one has $B\in\V\Leftrightarrow\widehat{B}\in\U$.

In the sequel, we will repeatedly use the following facts:
\begin{itemize}
\item[($\dagger)$]
If $B\in\V$ and $n\in\widehat{B}\cap\widehat{B}'$, 
then the set $B_n\cap\widehat{B}/n\cap\widehat{B}'/n\in\U$.
\item[($\ddagger$)] 
If $B\in \U$ then for every $\ell\in \N$ there exists a pattern $b, L, bL, \ldots, bL^\ell\in B$.
\end{itemize}
To prove $(\dagger)$, observe that $B_n\in\U$ since $n\in\widehat{B}$,
and $\widehat{B}/n\in\U$ since $n\in\widehat{B}'$; besides, 
$\widehat{B}/n\in\U\Leftrightarrow(\widehat{B}/n)'=\widehat{B}'/n\in\U$.
Property $(\ddagger)$ follows from Corollary \ref{brauerforcentralsets},
since $\U$ is multiplicatively minimal and idempotent, and so $B$ is multiplicatively central.

Given a finite coloring $\N=C_1\cup\ldots\cup C_r$,
let $A:=C_i\in\V$ be the color that belongs to $\V$.
We are now ready for the construction.

\begin{itemize}
\item
Pick $b_0\in\widehat{A}\cap\widehat{A}'\in \U$, and let
$E_0:=A_{b_0}\cap \widehat{A}/b_0\cap \widehat{A}'/b_0$.
\end{itemize}
By $(\dagger)$ we have that $E_0\in\U$.
\begin{itemize}
\item 
Let $\ell_1:=1+b_0$. Since $E_0\in\U$, by $(\ddagger)$ we can pick a pattern:
\[b_1,\: L_1,\: b_1L_1, \ldots, \; b_1L_1^{\ell_1}\in E_0.\]
\item
Let $a_1\coloneq L_1^{b_0}$.
\end{itemize}
Note that $a_1\in A$ because $L_1\in A_{b_0}$.
\begin{itemize}
\item
Let $D_1:=\{b_0b_1L_1^i\mid i\leq \ell_1\}$ and let
$E_1:=\bigcap_{x\in D_1}(A_x\cap\widehat{A}/x\cap\widehat{A}'/x)$.
\end{itemize}
Note that $D_1\subseteq\widehat{A}\cap\widehat{A}'$ and so, by $(\dagger)$, the set $E_1\in\U$.
\begin{itemize}
\item 
Let $\ell_2:=1+\max D_1$. 
By $(\ddagger)$ we can pick a pattern
\[b_2, \;L_2, \;b_2L_2, \ldots,\:b_2L_2^{\ell_2 }\in E_1.\]
\item
Let $a_2\coloneq L_2^{b_0b_1L_1} $.
\end{itemize} 
Note that $a_2>a_1$.
Since $L_2\in\bigcap_{x\in D_1} A_x$, and $b_0 b_1 L_1, b_0b_1L_1^{1+b_0}\in D_1$,
we have that $a_2=L_2^{b_0b_1L_1}\in A$ and $a_2^{a_1}=L_2^{b_0b_1L_1^{1+b_0}}\in A$.
\begin{itemize}
\item
Let $D_2:=\{b_0b_1L_1^ib_2L_2^j\mid i\leq \ell_1,  j\leq \ell_2\}$
and let $E_2:=\bigcap_{x\in D_2}(A_x\cap\widehat{A}/x\cap\widehat{A}'/x)$.
\end{itemize}
Note that $D_2\subseteq\widehat{A}\cap\widehat{A}'$ and so, by $(\dagger)$, the set $E_2\in\U$.
\begin{itemize}
\item 
Let $\ell_3:=1+\max D_2$. 
By $(\ddagger)$ we can pick a pattern
\[b_3,\: L_3,\: b_3L_3, \ldots, \; b_3L_3^{\ell_3}\in E_2.\]
\item
Let $a_3:= L_3^{b_0b_1L_1b_2L_2}$.
\end{itemize}
Note that $a_3>a_2$.
Since $L_3\in\bigcap_{x\in D_2}A_x$, the elements below belong to $A$ 
because all the considered exponents belong to $D_2$:
\begin{align*}
a_3 & = L_3^{b_0b_1L_1b_2L_2}
\\
a_3^{a_1} & = L_3^{b_0b_1L_1^{1+b_0}b_2L_2}
\\ 
a_3^{a_2} & = L_3^{b_0b_1L_1b_2L_2^{1+b_0b_1L_1}}
\\
a_3^{a_2a_1} & = L_3^{b_0b_1L_1^{1+b_0}b_2L_2^{1+b_0b_1L_1}}
\\
a_3^{a_2^{a_1}} & = L_3^{b_0b_1L_1b_2L_2^{1+b_0b_1L_1^{1+b_0}}}
\\
a_3^{a_2^{a_1}a_1} & = L_3^{b_0b_1L_1^{1+b_0}b_2L_2^{1+b_0b_1L_1^{1+b_0}}}
\end{align*}
\end{proof}

\subsection{The general case}

We finally give a proof of Theorem \ref{mainmain} in its full generality.

For simplicity, we will adopt additive notation and use the symbol $+$ 
to indicate the operation $*$ on $S$.\footnote
{~See Notation \ref{additivenotation} and the following observations.}
To avoid possible ambiguities, we will always use the symbol $\cdot$ when
considering products between natural numbers.
Recall that, since $\theta$ is a homomorphism, we have that
$\theta(a)(\theta(b) c)=(\theta(a)\cdot\theta(b))c=\theta(a+b)c$ for all $a,b,c\in S$.

Let $\U$ be a non-principal minimal idempotent ultrafilter in $(\beta S,\circledast)$, 
and let $\V:=\Theta(\U\otimes\U)$ be the image ultrafilter of 
the tensor product $\U\otimes\U$ under the function $\Theta(a,b)=a *_\theta b=\theta(a) b$. 

For $B\subseteq S$ and $s\in S$, we denote:

\begin{itemize}
\item 
$B-s=\{t\in S\mid s+t\in B\}$.
\item 
$B'=\{s\in S\mid B-s\in \U\}$.
\item 
$B_s=\{t\in S\mid s*_\theta t\in B\}=\{t\in S\mid \theta(s)\, t\in B\}$.
\item 
$\widehat{B}=\{s\in S\mid B_s\in \U\}$.
\end{itemize}

Note that:
\begin{itemize}
\item
$B\in \U\Leftrightarrow B'\in \U$, since $\U$ is idempotent.
\item
$B\in\V\Leftrightarrow\widehat{B}\in\U$, by the definition of $\V=\Theta(\U\otimes\U)$.
\end{itemize}

In the sequel, we will use the following facts:
\begin{itemize}
\item[($\dagger)$]
If $B\in\V$ and $s\in\widehat{B}'$, 
then the shift $(\widehat{B}\cap\widehat{B}')-s=(\widehat{B}-s)\cap(\widehat{B}'-s)\in\U$.
\item[($\ddagger$)] 
If $B\in \U$ then for every $\ell\in \N$ there exists a pattern $b, L, b+L, \ldots, b+\ell L\in B$.
\end{itemize}
To show $(\dagger)$, note that
$\widehat{B}-s\in\U$ since $s\in\widehat{B}'$; besides, 
$\widehat{B}-s\in\U\Leftrightarrow(\widehat{B}-s)'=\widehat{B}'-s\in\U$.
Property $(\ddagger)$ follows from Corollary \ref{brauerforcentralsets},
since $\U$ is minimal and idempotent in $(\beta S, \circledast)$, and so $B$ is central.

We now proceed by induction along the lines of what done above 
for the special case of the exponential over $\N$.

\begin{itemize}
\item 
Pick $b_0\in\widehat{A}\cap\widehat{A}'\in \U$, and let $D_1:=\{b_0\}$ and
$E_1:= A_{b_0}\cap (\widehat{A}-b_0)\cap (\widehat{A}'-b_0)$.
\end{itemize}
Since $b_0\in\widehat{A}$, the set $A_{b_0}\in\U$;
and since $b_0\in\widehat{A}'$, by ($\dagger$) we have that 
$(\widehat{A}-b_0)\cap (\widehat{A}'-b_0)\in\U$. This shows that $E_1\in\U$.
\begin{itemize}
\item 
Let $\ell_1:=1+N\cdot\theta(b_0)$. By ($\ddagger$), 
we can find a pattern
$$b_1, L_1, b_1+L_1, \ldots, b_1+\ell_1 L_1\in E_1.$$
\item
Let $a_1:=\theta(b_0) L_1$.
\end{itemize}
Note that $a_1\in A$ because $L_1\in E_1\subseteq A_{b_0}$. 

Let us now turn to the inductive step $k+1$. Assume that
numbers $\ell_1,\ldots,\ell_k\in\N$, elements $b_0,\ldots,b_k$, $L_1,\ldots,L_k$, $a_1,\ldots,a_k\in S$, and sets
$D_1,\ldots,D_k, E_1,\ldots,E_k\subseteq S$ have been defined in such a way that
\begin{enumerate}
\item
$\ell_i=1+f_i(a_1, \ldots, a_{i-1})\cdot\theta(b_0+\sum_{t=1}^{i-1}(b_t+L_t))$ for $i=2,\ldots,k$.
\item
$D_i=\{b_0+\sum_{t=1}^{i-1}
(b_t+j_t L_t)\mid j_t\leq \ell_t\}\subseteq\widehat{A}\cap\widehat{A}'$ for $i=1,\ldots,k$.
\item
$E_i=\bigcap_{x\in D_i}(A_x\cap (\widehat A-x)\cap (\widehat A'-x))\in \U$ for $i=1,\ldots,k$.
\item
$b_i, L_i, b_i+L_i, \ldots , b_i+\ell_i L_i\in E_i$ for $i=1,\ldots,k$.
\item
$a_i=\theta(b_0+\sum_{t=1}^{i-1}(b_t+L_t))L_i$ for $i=1,2,\ldots,k$.
\item
$a_i\ne a_t$ for all $1\le t<i\le k$.
\item
$\text{EXP}_{\theta,N,\Phi}(a_1,\ldots,a_i)\subseteq A$ for $i=1,\ldots,k$.
\end{enumerate}

Then set:
\begin{itemize}
\item
$\ell_{k+1}:=1+f_{k+1}(a_1, \ldots, a_k)\cdot\theta(b_0+\sum_{t=1}^{k}(b_t+L_t))$.
\item
$D_{k+1}:=\{b_0+\sum_{t=1}^{k}
(b_t+j_t L_t)\mid j_t\leq \ell_t\}$.
\item
$E_{k+1}=\bigcap_{x\in D_{k+1}}(A_x\cap (\widehat A-x)\cap (\widehat A'-x))$.
\end{itemize}
Let us show that $D_{k+1}\subseteq \widehat{A}\cap\widehat{A}'$.
Given $j_t\le\ell_t$ for $t=1,\ldots,k$, observe that $z:=b_0+\sum_{t=1}^{k-1} j_t L_t\in D_k$.
Besides, $b_k+j_kL_k\in E_k\subseteq (\widehat{A}-z)\cap(\widehat{A}'-z)$,
and so $z+b_k+j_k L_k=b_0+\sum_{t=1}^k(b_t+ j_t L_t)\in\widehat{A}\cap\widehat{A}'$.

Now let $x\in D_{k+1}$. Since $x\in\widehat{A}$, the set $A_x\in\U$; and since 
$x\in\widehat{A}'$, by $(\dagger)$ the set $(\widehat{A}-x)\cap(\widehat{A}'-x)\in\U$;
consequently the set $E_{k+1}\in\U$, as a finite intersection of elements of $\U$.

Let $\mu_k:=\theta(b_0+\sum_{t=1}^k(b_t+L_t))$.
By the root-finiteness of $S$, for every $i\le k$ the set 
$\{s\in S\mid \mu_k s=a_i\}$ is finite and so, since $\U$ is non-principal, the set 
$$\Gamma_k:=\{s\in S\mid \mu_k s\notin\{a_1,\ldots,a_k\}\}\in\U.$$
Then $E_{k+1}\cap\Gamma_k\in\U$ and by ($\ddagger$) we can find a pattern
$$b_{k+1}, L_{k+1}, b_{k+1}+L_{k+1}, \ldots, b_{k+1}+\ell_{k+1} L_{k+1}\in E_{k+1}\cap\Gamma_k.$$
Finally, define:
\begin{itemize}
\item
$a_{k+1}:=\mu_k L_{k+1}$.
\end{itemize}

Note that $L_{k+1}\in\Gamma_k$ implies that $a_{k+1}\ne a_i$ for all $i=1,\ldots,k$.

We are left to show that
$\text{EXP}_{\theta,N,\Phi}(a_1,\ldots,a_k,a_{k+1})\subseteq A\setminus\{a_1,\ldots,a_k\}$.
Recall that every element in that set of $\theta$-exponentiations 
can be written in the form $(\sum_{t=1}^{k}\lambda_t a_t)*_\theta a_{k+1}$ for appropriate
$0\le\lambda_1\le N$, and $0\le \lambda_i\le f_i(a_1,\ldots,a_{i-1})$ for $2\le i\le k$
(see Definition \ref{def-FEthetaNPhi}).
Since $\theta$ is a homomorphism, we have the following equalities:
\begin{multline*}
\left(\sum_{i=1}^k\lambda_i a_i\right)*_\theta\,a_{k+1} =
\theta \left(\sum_{i=1}^k\lambda_i a_i\right) a_{k+1} =
\\
= \theta\left(\sum_{i=1}^k\lambda_i a_i\right) \left(\theta\left(b_0+\sum_{i=1}^k(b_i+L_i)\right) L_{k+1}\right) =
\\
= \left(\theta\left(\sum_{i=1}^k\lambda_i a_i\right)\cdot\ \theta\left(b_0+\sum_{i=1}^k(b_i+L_i)\right)\right) L_{k+1} = 
\theta(x) L_{k+1}
\end{multline*}
where
\begin{multline*}
x:=b_0+\sum_{i=1}^k(b_i+L_i)+\sum_{i=1}^k\lambda_i a_i =
b_0+\sum_{i=1}^k(b_i+L_i)+\sum_{i=1}^k\lambda_i\left(\theta\left(b_0+\sum_{j=1}^{i-1}(b_j+L_j)\right)L_i\right)=
\\
=b_0+\sum_{i=1}^k\left(b_i+\left(1+\lambda_i\cdot \theta\left(b_0+\sum_{j=1}^{i-1}(b_j+L_j)\right)\right)L_i\right).
\end{multline*}
Now observe that 
$1+\lambda_i\cdot \theta\left(b_0+\sum_{j=1}^{i-1}(b_j+L_j)\right)\le\ell_i$ for every $i=1,\ldots,k$, 
and so $x\in D_{k+1}$.
Since $L_{k+1}\in E_{k+1}\subseteq\bigcap_{x\in D_{k+1}}A_x$, it follows that 
$\theta(x) L_{k+1}\in A$, as desired.

\begin{remark}\label{braueridempotent}
The previous proof shows that the assumptions about the ultrafilter $\U$ 
in the statement of Theorem \ref{mainmain} could be weakened. 
In fact, any non-principal ultrafilter $\U$ that satisfies the following property is sufficient 
for the previous arguments to hold.
\begin{itemize}
\item[$(\star)$]
\emph{$\U$ is idempotent and for every $B\in\U$ and for every $\ell\in\N$, 
there exist elements $a, d, a+d, \ldots, a+\ell d\in B$.}
\end{itemize}
As noted in Corollary \ref{brauerforcentralsets}, every minimal idempotent ultrafilter 
satisfies $(\star)$. However, the converse is not true. Indeed, consider the set 
$\Delta^*\subset\beta\N$ of those ultrafilters that contain only sets of positive upper 
Banach density. Since $\Delta^*$ is a closed sub-semigroup (actually a two-sided ideal) 
of $(\beta\N,\oplus)$, there exist idempotent ultrafilters in $\Delta^*$, called \emph{essential idempotent}.
Sets belonging to an essential idempotent ultrafilter are called \emph{$D$-sets} 
and have relevant combinatorial properties, similar to those of central sets. 
It was shown by M. Beiglb\"ock, V. Bergelson, T. Downarowicz, and A. Fish in \cite{bbdf} 
that all Rado systems are solvable in $D$-sets, and hence
they contain ``Brauer configurations" $a, d, a+d, \ldots, a+\ell d$ for every $\ell\in\N$. 
So, every essential idempotent ultrafilter satisfies property $(\star)$; 
however, it is known that there exist essential idempotent ultrafilters 
that are not minimal (see \cite[\S 1.14]{bm}).
\end{remark}

\medskip
\section{Open questions}

We close this paper by itemizing a few open questions
that naturally arise from our research.

\smallskip
\begin{enumerate}
\item
Are there other examples of non-associative operations
that satisfy the infinitary partition regularity property of Hindman's Theorem?
\emph{E.g.}, can one extend the Main Theorem \ref{main} so to also include
a larger class of non-associative operations?

\smallskip
\item
More generally, is there a simple characterization of those non-associative operations
that satisfy Hindman's Theorem? Or at least,
is there a simple characterization of those operations
$F:S\times S\to S$ such that the triples of the form
$a, b, F(a,b)$ are monochromatic in $S$?

\smallskip
\item
Are there non-principal idempotent ultrafilters on $\N$ with respect to the exponential
function $\text{Exp}:(a,b)\mapsto b^a$?
(That is, ultrafilters $\U$ on $\N$ such that the image
ultrafilter $\text{Exp}(\U\otimes\U)=\U$.)
In \cite{lb} it was proved that the answer is negative if we 
reverse the order of exponentiation, \emph{i.e.}, if we replace
$\text{Exp}$ with $\text{Exp}':(a,b)\mapsto a^b$.

\smallskip
\item
Under the hypotheses of the Main Theorem \ref{main},
one could also consider finite patterns of products in the semigroup $(S,*)$:
$$\text{FP}(a_1,\ldots,a_\ell)=\{a_{n_1}*\ldots*a_{n_k}\mid 1\le n_1<\ldots<n_k\le\ell\}.$$
Given $\ell$, is it possible to extend that result to have
also $\text{FP}(a_1,\ldots,a_\ell)$ in the same color as $\text{FE}_{\theta,N,\Phi}(a_n)_{n=1}^\infty$?
(At least when the considered exponential-like operation is the usual exponentiation 
between natural numbers.) 
In this way, one would obtain a complete generalization
of Sahasrabudhe's results in \cite{sa}.

\smallskip
\item
Let us call \emph{Brauer} an ultrafilter $\U$ on $\N$ that is a 
witness to Brauer's Theorem, \emph{i.e.}, such that for every $B\in\U$ 
and for every $\ell\in\N$, there exist elements $a, d, a+d, \ldots, a+\ell d\in B$.
As discussed in the Remark \ref{braueridempotent}, every
essential idempotent ultrafilter is Brauer.
Are there idempotent Brauer ultrafilters that are not essentially idempotent?
\end{enumerate}

\bigskip
\noindent
\textbf{Acknowledgement.}
The authors express their gratitude to the anonymous referee for his/her attentive reading of the manuscript,
and for his/her constructive comments that were useful to simplify our proof.
These were especially useful for the inclusion of a simpler proof
in case of exponentiation between natural numbers.

\bigskip
\noindent
\textbf{Funding.}\
M. Di Nasso was supported by the Italian research project PRIN 2022
``Logical methods in combinatorics", 2022BXH4R5, 
Italian Ministry of University and Research (MUR), and is a member of the INdAM research group GNSAGA.
M. Ragosta is supported by project 25-15571S of the Czech Science Foundation (GA\v{C}R);
this work has been supported by Charles University Research Centre programme No.UNCE/24/SSH/026.
We acknowledge the MUR Excellence Department Project awarded to the Department of Mathematics, 
University of Pisa, CUP I57G22000700001.

\bigskip


\begin{thebibliography}{spazio}

\bibitem{ar}
V.I. Arnautov, Nediskretnaya Topologizuemost Schetnykh Kolets (Russian), 
\emph{Dokl. Akad. Nauk SSSR} \textbf{191} (1970), 747--750. 
[English Translation: Nondiscrete topologizability of countable rings, 
\emph{Soviet Math. Dokl.} \textbf{11} (1970), 423--426.]

\bibitem{bbdf}
M. Beiglb\"ock, V. Bergelson, T. Downarowicz, and A. Fish,
Solvability of Rado systems in $D$-sets,
\emph{Topology and its Applications} \textbf{156} (2009), 2565--2571.

\bibitem{bh}
V. Bergelson and N. Hindman, 
Nonmetrizable topological dynamics and Ramsey Theory,
\emph{Trans. Amer. Math. Soc.} \textbf{320} (1990), 293--320.

\bibitem{bhw}
V. Bergelson, N. Hindman, and K. Williams,
Polynomial extensions of the Milliken-Taylor Theorem,
\emph{Trans. Amer. Math. Soc.} \textbf{366} (2014), 5727--5748.

\bibitem{bm}
V. Bergelson and R. McCutcheon, 
Idempotent ultrafilters, multiple IP-recurrence and Szemer\'edi’s theorem,
\emph{Ergodic Theory and Dynamical Systems} \textbf{18} (1998), 55--73.

\bibitem{br}
A. Brauer, \"Uber sequenzen von potenzresten, 
\emph{Sitz.ber. Preuss. Akad. Wiss. Berl. Phys.-Math. Kl.} (1928), 9--16.

\bibitem{dhs} 
D. De, N. Hindman, D. Strauss, 
A new and stronger Central Sets Theorem, 
\emph{Fundamenta Mathematicae} \textbf{199} (2008), 155--175.

\bibitem{dn}
M. Di Nasso, Infinite monochromatic patterns in the integers, 
\emph{J. Combin. Theory Ser. A} \textbf{189} (2022), 28pp.

\bibitem{dj}
M. Di Nasso and R. Jin, Foundations of iterated star maps and their use in combinatorics,
\emph{Ann. Pure Appl. Logic} \textbf{176} (2025), 103511.

\bibitem{dpr}
M. Di Nasso, M. Pierobon, M. Ragosta,
Associative operations on the natural numbers and
partition regularity results, unpublished draft. 

\bibitem{dr}
M. Di Nasso and M. Ragosta, Monochromatic exponential triples: an ultrafilter proof,
\emph{Proc. Amer. Math. Soc.}, \textbf{152} (2024), 81--87.

\bibitem{fu} H. Furstenberg, 
\emph{Recurrence in Ergodic Theory and Combinatorial Number Theory}, 
Princeton University Press, 1981.

\bibitem{gt} G. Golan and B. Tsaban,
Hindman's coloring theorem in arbitrary semigroups,
\emph{J. Algebra} \textbf{395} (2013), 111--120.

\bibitem{hi74}
N. Hindman, Finite sums from sequences within cells of a partition of $\N$, 
\emph{J. Combin. Theory Ser. A} \textbf{17} (1974) 1--11.

\bibitem{hi79}
N. Hindman, Partitions and sums and products of integers,
\emph{Trans. Amer. Math. Soc.} \textbf{247} (1979), 227--245.

\bibitem{hi20}
N. Hindman, A history of central sets,
\emph{Ergodic Theory and Dynamical Systems} \textbf{40} (2020), 1--33.

\bibitem{hs}
N. Hindman and D. Strauss, \emph{Algebra in the Stone-\v{C}ech
Compactification, Theory and Applications} (2nd edition),
W. de Gruyter, 2011.

\bibitem{lb}
L. Luperi Baglini, Exponentiations of ultrafilters, arXiv:2308.02807v5.

\bibitem{lu}
M. Lupini, Actions on semigroups and an infinitary Gowers-Hales-Jewett Ramsey Theorem,
\emph{Trans. Amer. Math. Soc.} \textbf{371} (2019), 3083--3116.

\bibitem{mi}
K. Milliken, Ramsey's Theorem with sums or unions, 
\emph{J. Comb. Theory Ser. A} \textbf{18} (1975), 276--290.

\bibitem{sa}
J. Sahasrabudhe, 
Exponential patterns in arithmetic Ramsey theory, 
\emph{Acta Arithmetica} \textbf{182.1} (2018), 13--42.

\bibitem{san}
J.H. Sanders, \emph{A generalization of Schur's theorem}, Ph.D. thesis, Yale University, 1968.

\bibitem{sc} 
I. Schur, \"Uber die Kongruenz $x^m+y^m\equiv z^m\;(\textrm{mod } p)$, 
\emph{Jahresbericht der Deutschen Mathematiker-Vereinigung} \textbf{25} (1916), 114--117.

\bibitem{si}
A. Sisto, Exponential triples, 
\emph{The Electronic Journal of Combinatorics} \textbf{18} (2011), paper 147, 17 pp.

\bibitem{so}
A. Soifer, Ramsey Theory before Ramsey, prehistory and early history: an essay
in 13 parts, in \emph{Ramsey Theory - Yesterday, Today, and Tomorrow} (A. Soifer, ed.),
Birk\"auser, Progress in Mathematics \textbf{285} (2011), 1--26.

\bibitem{ta}
A.D. Taylor, A canonical partition relation for finite subsets of $\omega$, 
\emph{Journal of Combinatorial Theory}, Series A, \textbf{21} (1976), 137--146

\bibitem{vdW} B. Van der Waerden, 
Beweis einer Baudetschen Vermutung, in 
\emph{Nieuw Archief voor Wiskunde} \textbf{15} (1927), 212--216.

\end{thebibliography}
\end{document}